\documentclass[12pt,reqno]{amsart}
\usepackage{amscd,amsmath,amsthm,amssymb}
\usepackage{color}
\usepackage{tikz}
\usepackage{url}
\usepackage{circuitikz}
\usepackage{float}

\usepackage{lipsum}

\newtheorem{Theorem}{Theorem}[section]
\newtheorem{Lemma}[Theorem]{Lemma}
\newtheorem{Corollary}[Theorem]{Corollary}
\newtheorem{Proposition}[Theorem]{Proposition}
\newtheorem{Remark}[Theorem]{Remark}

\newtheorem{Conjecture}{Conjecture}
\newtheorem{Thm}[Conjecture]{Theorem}

\def\qed{\ifhmode\textqed\fi
	\ifmmode\ifinner\quad\qedsymbol\else\dispqed\fi\fi}
\def\textqed{\unskip\nobreak\penalty50
	\hskip2em\hbox{}\nobreak\hfill\qedsymbol
	\parfillskip=0pt \finalhyphendemerits=0}
\def\dispqed{\rlap{\qquad\qedsymbol}}

\def\m{\mathfrak{m}}

\def\Ass{\operatorname{Ass}}

\def\depth{\operatorname{depth}}
\def\lcm{\operatorname{lcm}}

\def\supp{\operatorname{supp}}
\def\reg{\operatorname{reg}}
\def\v{\operatorname{v}}
\def\vstab{\operatorname{vstab}}

\def\dstab{\operatorname{dstab}}
\def\astab{\operatorname{astab}}

\def\Tor{\operatorname{Tor}}

\begin{document}
	
	\title{The stable set of associated primes of a complementary edge ideal}
	\author{Antonino Ficarra}
	
	\address{Antonino Ficarra, BCAM -- Basque Center for Applied Mathematics, Mazarredo 14, 48009 Bilbao, Basque Country -- Spain, Ikerbasque, Basque Foundation for Science, Plaza Euskadi 5, 48009 Bilbao, Basque Country -- Spain}
	\email{aficarra@bcamath.org,\,\,\,\,\,\,\,\,\,\,\,\,\,antficarra@unime.it}
	
	\subjclass[2020]{Primary 13D02, 13C05, 13A02; Secondary 05E40}
	\keywords{Persistence property, complementary edge ideals, monomial ideals}
	
	\begin{abstract}
	We explicitly determine the associated primes of every power of a complementary edge ideal, prove that they satisfy the persistence property, and compute the v-function. In the course of the proofs, we completely describe the homological properties of all powers of squarefree monomial ideals generated in degrees large relative to the number of variables defining them.
	\end{abstract}
	
	\maketitle
	\section*{Introduction}
	
	In 1979, Brodmann \cite{B79} proved that the set of associated primes $\Ass(I^k)$ of the powers of an ideal $I$ in a Noetherian ring eventually becomes stable: there exists an integer $k_0>0$ such that $\Ass(I^{k})=\Ass(I^{k_0})$ for all $k\ge k_0$. The smallest integer $k_0$ for which this property holds is called the \textit{index of ass stability} of $I$ and is denoted by $\astab(I)$. Similarly, the set $\Ass(I^{k_0})$ is called the \textit{stable set of associated primes} of $I$, and it is denoted by $\Ass^\infty(I)$.
	
	Let $S=K[x_1,\dots,x_n]$ be the standard graded polynomial ring over a field $K$, and let $I\subset S$ be a monomial ideal. Then, the associated primes of $I$ are monomial prime ideals. Let $[n]=\{1,\dots,n\}$. A monomial prime ideal is an ideal of the form $P_F=(x_i:\ i\in F)$ for a subset $F\subset[n]$. A very general and effective criterion for determining when $P_F\in\Ass^\infty(I)$ was given in \cite{BHR12,BHR11}, see also \cite[Theorem 2.1]{FSPackA}. Since there are only finitely many subsets of $[n]$, determining $\astab(I)$ and $\Ass^\infty(I)$ for a monomial ideal $I\subset S$ are feasible problems to address. 
	
	In general, if $P_F\in\Ass(I^k)$ it is not necessarily true that $P_F\in\Ass(I^{k+1})$. We say that $I$ satisfies the \textit{persistence property} if
	$$
	\Ass(I)\subset\Ass(I^2)\subset\Ass(I^3)\subset\cdots.
	$$
	In this case $\Ass^\infty(I)=\bigcup_{k\ge1}\Ass(I^k)$. Not all monomial ideals satisfy the persistence property. On the other hand, Mart\'inez-Bernal, Morey, and Villarreal proved that squarefree monomial ideals generated in degree 2 satisfy the persistence property, see \cite[Theorem 2.15]{MMV}. These ideals are known as \textit{edge ideals} \cite{RV}. Given a finite simple graph $G$ on the vertex set $V(G)=[n]$ with edge set $E(G)$, the \textit{edge ideal} of $G$ is the squarefree monomial ideal of $S$ defined as
	$$
	I(G)\ =\ (x_ix_j:\ \{i,j\}\in E(G)).
	$$
	
	Any squarefree monomial ideal generated in degree 2 is an edge ideal. In \cite{LT}, Lam and Viet Trung described the set $\Ass(I(G)^k)$ for any graph $G$ and all $k\ge1$. 
	
	Let $I\subset S$ be a squarefree monomial ideal equigenerated in degree $n-2$. As noted in \cite{FM1,HQS}, $I$ can be seen as the \textit{complementary edge ideal}
	$$
	I_c(G)\ = ((x_1\cdots x_n)/(x_ix_j):\ \{i,j\}\in E(G))
	$$
	of a finite simple graph $G$ on the vertex set $[n]$. There is a rich interplay between the combinatorics of $G$ and the algebraic properties of $I_c(G)$ as highlighted by the results in \cite{FM,FM1,FM2,HQS,LWZ}.
	
	Given a finite simple graph $G$ and $i\in V(G)$, let $\deg_G(i)=|\{j:\ \{i,j\}\in E(G)\}|$. Then $i\in V(G)$ is an isolated vertex if and only if $\deg_G(i)=0$. Denote by $\widetilde{b}(G)$ the number of bipartite connected components of $G$ having more than one vertex. We denote by $G|_F$ the induced subgraph of $G$ on $F\subset[n]$. Inspired by the remarkable behaviour of the set $\Ass(I(G)^k)$ (see \cite{LT,MMV}), in this paper we prove:
	\begin{Thm}\label{ThmA}
		Let $G$ be a finite simple graph on the vertex set $[n]$ with $n\ge3$. Then $I_c(G)$ satisfies the persistence property:
		$$
		\Ass(I_c(G))\subset\Ass(I_c(G)^2)\subset\Ass(I_c(G)^3)\subset\cdots.
		$$
		Moreover,
		$$
		\Ass^\infty(I_c(G))=\{P_{\{i\}}:\ \deg_G(i)=0\}\cup\{P_F:\ |F|>1,\,\widetilde{b}(G|_F)=0\}.
		$$
		and $\astab(I_c(G))\le n-2$.
	\end{Thm}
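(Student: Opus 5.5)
Our plan is to reduce everything to localization at monomial primes and then work with squarefree ideals generated in degree $|F|-2$ inside $K[x_i:\ i\in F]$. For a monomial ideal $I$ and $F\subset[n]$, we have $P_F\in\Ass(I^k)$ if and only if $P_F\in\Ass((I(F))^k)$, where $I(F)$ is obtained by setting $x_j=1$ for $j\notin F$. For $I=I_c(G)$ the localization $I_c(G)(F)$ is generated by the monomials $u_F/(x_ix_j)$ with $\{i,j\}\in E(G)$, where $u_F=\prod_{i\in F}x_i$. Here we read $u_F/(x_ix_j)$ as the quotient after deleting whichever of $i,j$ lie in $F$. Consequently $I_c(G)(F)$ contains generators of degrees $|F|-2$, $|F|-1$ or $|F|$, depending on how many endpoints of the edge lie in $F$. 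If some edge meets $F$ in at most one vertex, the localized ideal contains a generator of degree at most $|F|-1$. The first step is therefore a structural description of these localized ideals in the three regimes.

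\emph{Case $|F|=1$.} Here $P_{\{i\}}$ is associated to some power exactly when $I(\{i\})$ is a proper principal ideal. Every edge not containing $i$ yields the generator $x_i$, while an edge containing $i$ yields $1$. Hence $P_{\{i\}}$ is associated (for every $k$) if and only if $\deg_G(i)=0$ and $G$ has at least one edge. The degenerate cases, where $G$ has no edges or $I$ is the unit ideal, are handled separately.

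\emph{Case $|F|>1$, "large degree" lemma.} We intend to prove, as the general tool promised in the abstract, a lemma describing when $\m=P_{[m]}$ is associated to $J^k$ for a squarefree $J\subset K[x_1,\dots,x_m]$ generated in degrees $\ge m-2$. If $J$ has a generator of degree $\ge m-1$ and we are localizing further, we may reduce to smaller $F$. So the essential case is $J=I_c(H)$ with $H=G|_F$ on $m=|F|$ vertices, together with possibly extra generators $u_F/x_i$ coming from edges with exactly one end in $F$; these extra generators only enlarge $J$. The aim is to show that $\m\in\Ass(I_c(H)^k)$ for $k\ge m-2$ if and only if $H$ has no bipartite component with an edge. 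We expect the extra generators not to change this condition, and this must be checked.

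To prove this lemma we plan to use the identification $I_c(H)^k=u^k\cdot\big(\text{the degree}-2k\text{ part of } I(H)^k \text{ inverted}\big)$. More concretely, $I_c(H)^k$ is generated by $u^k/w$ with $w$ ranging over products of $k$ edges. Then $\m\in\Ass(I_c(H)^k)$ means there is a monomial $v\notin I_c(H)^k$ with $x_iv\in I_c(H)^k$ for all $i$. We will translate this, via exponent vectors, into a statement about $k$-edge multisets (degree sequences $d\in\mathbb{Z}_{\ge0}^m$ with $\sum d=2k$ realizable by $H$). The witness should be $v=u^k/(\text{something})$, and the condition becomes the existence of a vector $d$ that is not realizable while every $d+e_i$-shifted neighbour pattern is realizable. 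For $m\ge3$, $I_c(H)$ is polymatroidal-like, and the realizability of $d$ by a multigraph supported on $H$ is governed by the classical parity and bipartite-balance conditions, as in Lam–Trung's analysis of $I(H)^k$. A bipartite component forces a balance equation $\sum_{A}d=\sum_B d$ that always obstructs the witness. When every component is non-bipartite, odd cycles let us construct a witness once $k\ge m-2$. Persistence will follow by multiplying a witness by a generator $u/(x_ix_j)$ for a suitable edge. Equivalently, we show the realizable set at level $k+1$ contains the level-$k$ set shifted by $e_i+e_j$, using Ratliff-type $I^{k+1}:I=I^k$ (which holds when $I$ is normal; complementary edge ideals are expected to have the same normality as $I(G)$, i.e. the odd cycle condition). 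The combinatorial realizability step with the explicit bound $k\ge m-2\le n-2$ is the main obstacle. We would first try induction on $m$, deleting a vertex of an odd cycle.
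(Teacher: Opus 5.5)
Your reduction to monomial localizations, your description of the generators of $I_c(G)(P_F)$, and your treatment of $|F|=1$ are correct. The two steps that actually carry the proof, however, are either missing or would fail.

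\emph{Persistence.} To push a witness $v$ with $I_c(G)^k:v=P_F$ up one power by multiplying it with a generator, you need the strong persistence property $I_c(G)^{k+1}:I_c(G)=I_c(G)^k$. You plan to get this from normality via Ratliff's argument. Even granting your expectation that normality is governed by the odd cycle condition, this gives nothing for graphs that violate it. For example, take $G=2K_3$: the monomial ${\bf x}_{[6]}^2$ lies in $\overline{I_c(G)^3}\setminus I_c(G)^3$. Its square is the product of all six generators, but membership in $I_c(G)^3$ would require a perfect matching of $2K_3$. As the paper notes at the end, whether $I_c(G)$ always has the strong persistence property is open. The paper needs no witnesses at all. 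By Theorem \ref{ThmB}, persistence follows because the family of squarefree monomial ideals generated in degrees $\ge|\supp(I)|-2$ is closed under monomial localization and all its members have non-increasing depth functions. That monotonicity is Theorem \ref{ThmC}(a); in the new mixed-degree case it is proved through the Betti splittings $J_\ell=J_{\ell-1}+{\bf x}_{[p]}^{k-\ell}{\bf y}_{[q]}^{\ell}I_y^{k-\ell}I_x^{\ell}$ of Theorem \ref{Thm:reg-deg>=n-2}. Consequently, once $\depth S_F/I(P_F)^k=0$, it stays $0$ for all larger $k$.

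\emph{The case $|F|>1$.} The combinatorial core is left as ``the main obstacle'': the existence of a witness exactly under the right graph condition, with an explicit bound on $k$. This is also your only route to $\astab\le n-2$. The paper takes it from \cite[Theorem 4.1]{FM2}: $\depth S_F/I_c(H)^k=b(H)$ for $k\ge|V(H)|-2$, where $b(H)$ counts \emph{all} bipartite components, isolated vertices included. Note also that parity and bipartite balance do not decide whether a degree vector comes from $k$ edges of $H$; Hall/Tutte-type conditions on independent sets enter.

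More seriously, your target lemma is false as stated. If $H=G|_F$ has an isolated vertex $i$, then $I_c(H)^k=x_i^kI_c(H\setminus\{i\})^k$, so $\m$ is never associated, whatever the remaining components are. The extra generators are therefore not harmless; they are exactly what produces $\widetilde b$. By Proposition \ref{Prop:local-I_c(G)}, the minimal extra generators are ${\bf x}_F/x_i$ for $i$ isolated in $G|_F$ but not in $G$. Theorem \ref{Thm:reg-deg>=n-2}(d) shows that each of them lowers the stable depth by one. The result is $b(G|_F)$ minus their number, which equals $\widetilde b(G|_F)$ when $G$ has no isolated vertices. A vertex isolated in $G$ itself yields no extra generator, so $P_F$ with $|F|>1$ containing it is never associated. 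This is why the paper first strips the isolated vertices of $G$, and the displayed formula must be read with $F$ avoiding them.

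Finally, ``reducing to smaller $F$'' when generators of degree $\ge m-1$ occur makes no sense here, since it is $\m_F$ itself that must be detected. It also skips the edgeless case, where $I_c(G)(P_F)$ is the squarefree Veronese ideal of degree $|F|-1$ and $\m_F$ is associated only from the $(|F|-1)$-st power on.
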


    To prove this result, we adopt the following strategy. Let $I\subset S$ be a monomial ideal with minimal monomial generating set $\mathcal{G}(I)$. Given $F\subset[n]$ non-empty, we put $S_F=K[x_i:i\in F]$. We define the $K$-linear map $\varphi_F:S\rightarrow S_F$ by setting $\varphi_F(x_i)=x_i$ for $i\in F$, and $\varphi_F(x_j)=1$ otherwise. The monomial ideal $I(P_F)=\varphi_F(I)\subset S_F$ is called the \textit{monomial localization} of $I$ at $P_F$. Concretely, $I(P_F)$ is the monomial ideal of $S_F$ obtained by replacing in $I$ all variables $x_j$ by $1$ for all $j\notin F$.
    
    We have $P_F\in\Ass(I)$ if and only if $\depth S_F/I(P_F)=0$. In view of this fact, Theorem \ref{ThmA} will be proved as a consequence of the following result. It is well-known to the experts (see \cite{HRV}), but for the sake of record, we state it here explicitly.
    \begin{Thm}\label{ThmB}
    	Let $\mathcal{F}$ be a family of monomial ideals of $S$ satisfying the following two properties.
    	\begin{enumerate}
    		\item[\textup{(a)}] $I(P_F)\in\mathcal{F}$, for all $I\in\mathcal{F}$ and all $F\subset[n]$.
    		\item[\textup{(b)}] The function $k\mapsto\depth S/I^k$ is non-increasing, for all $I\in\mathcal{F}$.
    	\end{enumerate}
    	Then, all ideals $I\in\mathcal{F}$ satisfy the persistence property:
    	$$
    	\Ass(I)\subset\Ass(I^2)\subset\Ass(I^3)\subset\cdots.
    	$$
    	Furthermore
    	$$
    	\Ass^\infty(I)=\bigcup_{k\ge1}\Ass(I^k)=\{P_F:\ \depth S_F/I(P_F)^k=0\ \text{for some}\ k\}.
    	$$
    \end{Thm}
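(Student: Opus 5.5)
The plan is to reduce everything to the depth-zero criterion via monomial localization. The key facts I will use are standard. First, for $F\subset[n]$, localization commutes with powers: $I^k(P_F)=\varphi_F(I^k)=\varphi_F(I)^k=I(P_F)^k$. This holds because $\varphi_F$ is multiplicative on monomials and monomial ideals are generated by monomials. Second, $P_F\in\Ass(I^k)$ if and only if $P_F\in\Ass(I^k(P_F))$ as an ideal of $S_F$, that is, if and only if $\depth S_F/I(P_F)^k=0$. This is the fact recalled just before the statement, applied to the monomial ideal $I^k$.

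Now fix $I\in\mathcal{F}$, $k\ge1$, and $P_F\in\Ass(I^k)$. Then $\depth S_F/I(P_F)^k=0$. By property (a), $J=I(P_F)$ belongs to $\mathcal{F}$. Strictly, $J$ lies in $S_F$, so I will regard it as a monomial ideal of $S$ through its generators. Then $S/JS$ is $(S_F/J)[x_j:\ j\notin F]$, and so
\[
\depth S/J^kS=\depth S_F/J^k+(n-|F|).
\]
Since depth is shifted by the same constant for every $k$, property (b) for $JS$ gives that $k\mapsto\depth S_F/J^k$ is non-increasing. Since depth is non-negative and $\depth S_F/J^k=0$, we get $\depth S_F/J^{k+1}=0$. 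Hence $P_F\in\Ass(I^{k+1})$, which proves persistence.

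For the description of the stable set, persistence gives that the chain $\Ass(I^k)$ is increasing. Its union is finite, since there are finitely many $P_F$, so the chain stabilizes, and by Brodmann $\Ass^\infty(I)=\bigcup_k\Ass(I^k)$. By the localization criterion, $P_F$ lies in this union exactly when $\depth S_F/I(P_F)^k=0$ for some $k$, which gives the displayed equality.

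I expect the only delicate step to be the bookkeeping between $S_F$ and $S$ in (a) and (b): I need property (b) to be usable for $I(P_F)$ viewed in its own ring. I will handle this by the polynomial-extension depth formula above. I will also need a careful justification that $P_F\in\Ass(I)$ if and only if $\depth S_F/I(P_F)=0$. This follows by localizing at $P_F$, setting $x_j=1$ for $j\notin F$ (these variables are units after localization), and using that $\mathfrak{m}_F\in\Ass$ is equivalent to depth zero.
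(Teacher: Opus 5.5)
Your proposal is correct and follows the paper's proof essentially step for step. You localize at $P_F$, use $I^k(P_F)=I(P_F)^k$ and the depth-zero criterion, transfer property (b) to $S_F$ via $\depth S/I(P_F)^kS=\depth S_F/I(P_F)^k+n-|F|$, and conclude that depth zero persists. Your finiteness argument for $\Ass^\infty(I)=\bigcup_k\Ass(I^k)$ simply makes explicit what the paper leaves as ``this argument also proves the second assertion.''
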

    
    Using this result, one can easily prove that polymatroidal ideals satisfy the persistence property, see \cite[Proposition 2.3]{HRV}.
    
    In order to prove Theorem \ref{ThmA}, we employ Theorem \ref{ThmB} together with the following Theorem \ref{ThmC}. This theorem significantly extends several results proved in \cite{FM,FM1,FM2}.
    
    To state this result, we need more notation.\smallskip
    
    Given a finite simple graph $G$, we denote by $c(G)$ the number of connected components of $G$ having more than one vertex.
    
    Let $I\subset S$ be any ideal. Brodmann \cite{B79a} proved that the function $k\mapsto\depth S/I^k$ is eventually constant. The least integer $k_*>0$ such that $\depth S/I^k=\depth S/I^{k_*}$ for all $k\ge k_*$ is called the \textit{index of depth stability} of $I$ and is denoted by $\dstab(I)$.
    
    Given a homogeneous ideal $I\subset S$ and an integer $j\ge0$, we denote by $I_{\langle j\rangle}$ the homogeneous ideal generated by all homogeneous polynomials of degree $j$ belonging to $I$. We say that $I$ is componentwise linear if $I_{\langle j\rangle}$ has linear resolution for all $j\ge0$.
    
    Let $I\subset S$ be a monomial ideal. We say that $I$ has \textit{linear quotients} if there exists an order $u_1,\dots,u_m$ of $\mathcal{G}(I)$ such that $(u_1,\dots,u_{j-1}):u_j$ is generated by variables, for all $j=1,\dots,m$. A monomial ideal with linear quotients is componentwise linear \cite[Theorem 8.2.15]{HHBook} but the converse is not necessarily true.\smallskip
    
    Now, we are in a position to state Theorem \ref{ThmC}.
    
    \begin{Thm}\label{ThmC}
    	Let $I\subset S$ be a squarefree monomial ideal generated in degrees $\ge n-2$. Then, the following statements hold.
    	\begin{enumerate}
    		\item[\textup{(a)}] The function $k\mapsto\depth S/I^k$ is non-increasing.
    		\item[\textup{(b)}] $\operatorname{dstab}(I)\le n-1$.
    		\item[\textup{(c)}] \begin{enumerate}
    			\item[\textup{(i)}] If $I$ is equigenerated in degree $d\in\{n-1,n\}$, then
    			$$
    			\reg I^k\ =\ dk,\quad\textit{for all}\ k\ge1.
    			$$
    			\item[\textup{(ii)}] If $I$ is equigenerated in degree $n-2$, let $G$ be the unique simple graph on $[n]$ such that $I=I_c(G)$, and put $c=c(G)$. Then
    			$$
    			\reg I^k\ =\ \begin{cases}
    				(n-1)k&\textit{for}\ 1\le k\le c-2,\\
    				(n-2)k+c-1&\textit{for}\ k\ge c-1.
    			\end{cases}
    			$$
    			\item[\textup{(iii)}] If $I$ is minimally generated in degrees $n-2$ and $n-1$, then
    			$$
    			\reg I^k\ =\ (n-1)k,\quad\textup{for all}\ k\ge1.
    			$$
    		\end{enumerate}
    		\item[\textup{(d)}] The following conditions are equivalent.
    		\begin{enumerate}
    			\item[\textup{(i)}] $I^k$ has linear quotients for all $k\ge1$.
    			\item[\textup{(ii)}] $I^k$ is componentwise linear for all $k\ge1$.
    			\item[\textup{(iii)}] Either $I_{\langle n-2\rangle}=0$ or $I_{\langle n-2\rangle}=I_c(G)\ne0$ and $c(G)=1$.
    		\end{enumerate}
    		\item[\textup{(e)}] The graded Betti numbers of $I^k$ do not depend on $K$, for all $k\ge1$.
    	\end{enumerate}
    \end{Thm}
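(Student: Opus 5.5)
The plan is to reduce Theorem \ref{ThmC} to a case analysis on the generating degrees, and to use the structure of powers of ideals generated in large degree. Write $I=I_{\langle n\rangle}+I_{\langle n-1\rangle}+I_{\langle n-2\rangle}$ restricted to minimal generators. If $x_1\cdots x_n\in\mathcal{G}(I)$, then $I=(x_1\cdots x_n)$ is principal. In that case $I^k$ is principal, $\depth S/I^k=n-1$ for all $k$, and everything is trivial. If $I$ is generated in degree $n-1$, then $I=x_A\cdot(x_i:i\notin A)$ for a suitable set $A$, where $x_A$ denotes the product of the variables common to all generators. Hence $I^k=x_A^k\,\m_B^k$ with $\m_B=(x_i:i\in B)$, which has linear quotients in the lex order. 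This gives $\reg I^k=(n-1)k$, depth constantly $n-|B|$, and characteristic-free Betti numbers. That settles (c)(i) and the easy cases of (a), (b), (d), (e).

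For the main case $I_{\langle n-2\rangle}=I_c(G)\neq0$, I would first treat the equigenerated case $I=I_c(G)$. The key observation is that a monomial in $I_c(G)^k$ has the form $(x_1\cdots x_n)^k/u$, with $u$ a product of $k$ edges of $G$. This translates multiplicative questions about $I_c(G)^k$ into questions about the $k$-th power of the edge ideal, i.e. walks and matchings of $G$. First I would reduce to graphs without isolated vertices: an isolated vertex $i$ divides every generator, so $I_c(G)=x_i\cdot I'$ with $I'$ the complementary edge ideal on $n-1$ variables, which changes nothing. Splitting $G$ into connected components $G_1,\dots,G_c$, I expect that $I_c(G)^k$ decomposes through the factorization
\[
I_c(G)=\sum_{j} \Big(\prod_{l\ne j} x_{V(G_l)}\Big) I_c(G_j).
\]
Powers are then sums of products of $I_c(G_j)^{a_j}$ over the compositions $a_1+\dots+a_c=k$. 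For a connected $G$, I would prove that $I_c(G)^k$ has linear quotients, using an order induced by a lex order on edge-products together with the exchange property coming from connectivity: two edge multisets can be transformed into each other along paths. This handles (d) and (e) and gives $\reg=(n-2)k$ when $c=1$. For general $c$, the colon ideals acquire nonlinear generators corresponding to moving weight between components. I would compute $\reg$ via a Betti splitting, or via the short exact sequences obtained by adding the summands one composition at a time. The correction term $c-1$ (respectively $(n-1)k$ for small $k$) should arise from the "all components used" compositions.

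For depth, I would compute $\depth S/I_c(G)^k$ via Takayama's formula or through polarization and degree complexes. Since the generators have such high degree, $\m$-primary-like behaviour appears exactly when $k\ge$ the size needed for an odd cycle or bipartite obstruction to be absorbed. The guess is $\depth S/I^k=\widetilde{b}$-type expressions that decrease and stabilize by $k=n-1$; a component on $m$ vertices needs at most about $m-2$ steps. I would show non-increase using $I^{k+1}:f=I^k$ for a suitable generator $f$ (a "colon by edge" argument, valid because $\depth S/(I^{k+1}:f)\ge\depth S/I^{k+1}$). The mixed-degree case (c)(iii) is handled by noting that adding degree-$(n-1)$ generators $x_{[n]\setminus i}$ kills the nonlinear syzygies. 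I would verify this by showing $I^k_{\langle (n-1)k\rangle}$ has linear quotients and that $I^k$ is generated in degrees $(n-2)k,\dots$ with the top strand dominating.

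I expect the main obstacle to be the depth computation in (a)–(b) for disconnected or bipartite $G$, i.e. proving monotonicity together with the bound $\dstab\le n-1$ uniformly. For that step I would first attempt an explicit formula for $\depth S/I_c(G)^k$ in terms of $k$, $c(G)$ and $\widetilde b(G)$, proved by induction on $k$ using the colon identity above and localization at $P_F$.
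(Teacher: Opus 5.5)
Your case split matches the paper's, but the proposal has genuine gaps. The first is in the degree-$(n-1)$ case, which you misidentify. The minimal generators are ${\bf x}_{[n]}/x_i$ for $i\in B$, so $I={\bf x}_A\cdot({\bf x}_B/x_i:\ i\in B)$ with $A=[n]\setminus B$. The second factor is the squarefree Veronese ideal of degree $|B|-1$ in the variables indexed by $B$, not $\mathfrak{m}_B$; the two agree only when $|B|\le 2$. Its depth function is not constant. For $n=3$ and $I=(x_1x_2,x_1x_3,x_2x_3)$ one has $\depth S/I=1$, whereas $x_1x_2x_3\notin I^2$ and $I^2:x_1x_2x_3=\mathfrak{m}$, so $\depth S/I^2=0$. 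In general, for $B=[n]$ the depth starts at $n-2$ and stabilizes only at $k=n-1$. This is exactly the case that forces the bound $n-1$ (rather than $n-2$) in (b), so your argument does not establish (a) and (b) here. The conclusions (c)(i), (d), (e) still hold because the ideal is matroidal. For the depth, however, you need the known behaviour of powers of matroidal ideals, or a direct argument like the paper's: ${\bf x}_{[p]}^{p-2}$ witnesses $\depth S_x/I_x^{p-1}=0$, combined with non-increasing depth for matroidal ideals.

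Outside this case you state targets rather than arguments. For $I=I_c(G)$, the regularity formula (c)(ii), the monotonicity of depth and the bound on $\dstab$ are only conjectured in your text (``should arise'', ``the guess is''). The paper does not reprove them; it imports them from [FM, FM1, FM2], and you could do the same. Your proposed mechanism for monotonicity is far from routine. An identity $I^{k+1}:f=I^k$ for a single generator $f$ would give $I^k\subseteq I^{k+1}:I\subseteq I^{k+1}:f=I^k$, i.e.\ the strong persistence property for $I_c(G)$, which the paper explicitly lists as open. Most importantly, in the mixed case (the only case the paper actually proves) you miss the key structural fact. A minimal generator ${\bf x}_{[n]}/x_i$ of degree $n-1$ forces $i$ to be an isolated vertex of $G$, since otherwise ${\bf x}_{[n]}/(x_ix_j)\in I$ divides it. Hence $I={\bf x}_{[p]}I_c(H)+{\bf y}_{[q]}I_x$, where $I_x=({\bf x}_{[p]}/x_i:\ i\in[p])$, $H$ is $G$ minus these isolated vertices, and the two factors live in disjoint sets of variables. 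The paper then builds $I^k=\sum_{h=0}^k{\bf x}_{[p]}^{k-h}{\bf y}_{[q]}^hI_c(H)^{k-h}I_x^h$ one summand at a time. Each step is a Betti splitting with intersection ${\bf x}_{[p]}^{k-\ell+1}I_x^{\ell-1}{\bf y}_{[q]}^{\ell}I_c(H)^{k-\ell}$, and Tor-vanishing comes from $\partial({\bf y}_{[q]}^{\ell}I_c(H)^{k-\ell})\subseteq{\bf y}_{[q]}^{\ell-1}I_c(H)^{k-\ell+1}$. This gives explicit max/min formulas for $\reg I^k$ and $\depth S/I^k$ in terms of powers of $I_c(H)$ and $I_x$, from which (a), (b), (c)(iii) and (e) follow. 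Ordering $\mathcal{G}(I^k)$ by $h$ then gives linear quotients for (d). Your ``top strand dominating'' idea falls short even if you proved that $(I^k)_{\langle(n-1)k\rangle}$ has linear quotients, for which you give no argument. Via Eisenbud--Goto it would yield only (c)(iii), and it says nothing about depth, $\dstab$, field independence, or linear quotients of $I^k$ itself in the mixed case.
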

    
    The paper is organized as follows.
    
    In Section \ref{sec1} we give the proof of Theorem \ref{ThmB} and as a consequence we show that all squarefree monomial ideals of $S$ generated in degrees $\ge n-2$ satisfy the persistence property. Section \ref{sec2} is devoted to the proof of Theorem \ref{ThmC}. Next, in Section \ref{sec3}, we prove Theorem \ref{ThmA}.
    
    Section \ref{sec4} contains some applications of Theorem \ref{ThmA}. In Corollary \ref{Cor:RoySaha}, we give a simple proof of the main result of \cite{RS}. Namely, we classify the complementary edge ideals whose ordinary and symbolic powers coincide. In Theorem \ref{Thm:v-numb-I_c(G)}, we compute the $\v$-function of any complementary edge ideal.
    
	\section{Criteria for the persistence property}\label{sec1}
	
	In this short section, we prove Theorem \ref{ThmB}.
    \begin{proof}[Proof of Theorem \ref{ThmB}]
    	Let $I\in\mathcal{F}$ and $P_F\in\Ass_S(I^k)$ for some $F\subset[n]$ and $k\ge1$. Then $P_F$ is the maximal ideal of $S_F$ and $P_F\in\Ass_{S_F}(I^k(P_F))=\Ass_{S_F}(I(P_F)^k)$. This is equivalent to the fact that $\depth S_F/I(P_F)^k=0$. By property (a), $I(P_F)\in\mathcal{F}$. Note that $\depth S/I(P_F)^k=\depth S_F/I(P_F)^k+n-|F|$. By property (b), the depth function $k\mapsto\depth S/I(P_F)^k$ is non-increasing. It follows that $\depth S_F/I(P_F)^k$ is non-increasing too. Hence, $\depth S_F/I(P_F)^\ell=0$, and so $P_F\in\Ass_{S_F}(I(P_F)^\ell)=\Ass_{S_F}(I^\ell(P_F))$, for all $\ell\ge k$. Therefore $P_F\in\Ass_S(I^\ell)$ for all $\ell\ge k$, and so $I$ satisfies the persistence property. This argument also proves the second assertion.
    \end{proof}
    
    Theorem \ref{ThmB} can be used to show that some classes of monomial ideals satisfy the persistence property. For instance, polymatroidal ideals, see \cite[Proposition 2.3]{HRV}.
    
    As the main new application of Theorem \ref{ThmB}, we show
    \begin{Theorem}
    	Let $I\subset S$ be a squarefree monomial ideal generated in degrees $\ge n-2$. Then $I$ satisfies the persistence property.
    \end{Theorem}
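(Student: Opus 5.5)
We apply Theorem \ref{ThmB} to the family $\mathcal{F}$ consisting of all squarefree monomial ideals $I\subset S$ such that, letting $\supp(I)$ denote the set of variables appearing in the generators of $I$, every minimal generator of $I$ has degree $\ge |\supp(I)|-2$. Working relative to the support is natural, since monomial localization removes variables. It is also harmless, because all invariants involved (depth up to a shift, associated primes, the persistence property) behave well under extension of the polynomial ring by free variables. Every squarefree monomial ideal generated in degrees $\ge n-2$ lies in $\mathcal{F}$, since $|\supp(I)|\le n$. So the theorem follows once we check properties (a) and (b) of Theorem \ref{ThmB} for $\mathcal{F}$.

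For property (b), take $I\in\mathcal{F}$ and set $m=|\supp(I)|$. We regard $I$ as an ideal $J$ of the polynomial ring $S'$ in the $m$ variables of its support, so that $J$ is generated in degrees $\ge m-2$. We have $\depth S/I^k=\depth S'/J^k+(n-m)$. By Theorem \ref{ThmC}(a), applied with $m$ in place of $n$, the function $k\mapsto\depth S'/J^k$ is non-increasing, and hence so is $k\mapsto\depth S/I^k$. One caveat is that Theorem \ref{ThmC} may implicitly need small $m$, or a nonzero proper ideal, to make sense. The degenerate cases $I=0$ and $I=S$ are trivial. For $m\le 2$ the ideal $I$ is generated by variables or by a single monomial, so all powers have constant depth or are handled directly.

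For property (a), take $I\in\mathcal{F}$ and $F\subset[n]$. The localization $I(P_F)$ is again squarefree. Each generator $u\in\mathcal{G}(I)$ maps to $\varphi_F(u)=u/\prod_{j\in \supp(u)\setminus F}x_j$, and this image has degree $\deg u-|\supp(u)\setminus F|\ge \deg u-|\supp(I)\setminus F|$. Moreover $\supp(I(P_F))\subset \supp(I)\cap F$. Hence, with $m=|\supp(I)|$ and $m'=|\supp(I)\cap F|$, every image has degree
$$\ge (m-2)-(m-m')=m'-2\ge |\supp(I(P_F))|-2.$$
The minimal generators of $I(P_F)$ are among these images, so $I(P_F)\in\mathcal{F}$. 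If $I(P_F)=S_F$, it is the unit ideal and causes no problem.

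The main subtlety is exactly the point addressed by the choice of family. The naive family, ideals of $S$ generated in degrees $\ge n-2$, is \emph{not} closed under localization, because localization lowers degrees. One must therefore measure the degree bound against the number of variables that actually remain. With $\mathcal{F}$ defined relative to the support, Theorem \ref{ThmB} applies directly, and we obtain the persistence property for every $I\in\mathcal{F}$, in particular for $I$.
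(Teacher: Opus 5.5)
Your proposal is correct and follows the paper's proof: you use the same support-relative family $\mathcal{F}$, check closure under monomial localization for property (a), and invoke Theorem \ref{ThmC}(a) for property (b). Your single estimate $\deg\varphi_F(u)\ge m'-2$ is a tidier version of the paper's case computation, and your explicit passage to the polynomial ring on $\supp(I)$, including the small cases $m\le 2$, makes precise a step the paper leaves implicit.
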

    \begin{proof}
    	Recall that the \textit{support} of a monomial ideal $I\subset S$ is defined as the set $\supp(I)=\bigcup_{u\in\mathcal{G}(I)}\supp(u)$, where the support of a monomial $u\in S$ is defined as the set $\supp(u)=\{i:\ x_i\ \textup{divides}\ u\}$.
    	
    	Let us define the family
    	$$
    	\mathcal{F}\ =\ \bigg\{I\subset S\,\,\bigg|\,\, \substack{\displaystyle I\ \textup{is a squarefree monomial ideal}\\[3pt] \displaystyle\textup{generated in degrees}\ \ge\ |\supp(I)|-2}\,\bigg\}.
    	$$
    	
    	In view of Theorem \ref{ThmB}, our assertion follows once we show that the properties (a) and (b) in Theorem \ref{ThmB} are satisfied by the family $\mathcal{F}$.
    	
    	For a non-empty subset $A\subset[n]$, we put ${\bf x}_A=\prod_{i\in A}x_i$. Let $I\in\mathcal{F}$ and put $G=\supp(I)\subset[n]$. Let $i,j\in G$ be distinct elements. Let $F\subset[n]$. We note that
    	$$
    	\varphi_F({\bf x}_G)\ =\ {\bf x}_{G\cap F},\quad\quad\quad \varphi_F({\bf x}_{G\setminus\{i\}})\ =\ \begin{cases}
    		\,{\bf x}_{G\cap F}&\textup{if}\ i\notin G,\\
    		\,{\bf x}_{(G\cap F)\setminus\{i\}}&\textup{if}\ i\in G,
    	\end{cases}
    	$$
    	$$
    		\varphi_F({\bf x}_{G\setminus\{i,j\}})\ =\ \begin{cases}
    			\,{\bf x}_{G\cap F}&\textup{if}\ i,j\notin G,\\
    			\,{\bf x}_{(G\cap F)\setminus\{i\}}&\textup{if}\ i\in G,\,j\notin G,\\
    			\,{\bf x}_{(G\cap F)\setminus\{j\}}&\textup{if}\ i\notin G,\,j\in G,\\
    			\,{\bf x}_{(G\cap F)\setminus\{i,j\}}&\textup{if}\ i,j\in G.
    		\end{cases}
    	$$\smallskip
    	
    	\noindent Since $\supp(I(P_F))\subset\supp(I)\cap F=G\cap F$, the above computations show that $I(P_F)$ is a squarefree monomial ideal generated in degrees $\ge|G\cap F|-2\ge|\supp(I(P_F))|-2$. Hence $I(P_F)\in\mathcal{F}$ and the property (a) is satisfied.
    	
    	The property (b) follows from statement (a) of Theorem \ref{ThmC}.
    \end{proof}
	
	\section{Squarefree monomial ideals generated in big degrees}\label{sec2}
	
	Let $S=K[x_1,\dots,x_n]$ with $n\ge3$. Our goal is to prove Theorem \ref{ThmC}. To this end, note that for a squarefree monomial ideal $I\subset S$ generated in degrees $\ge n-2$ only one of the following possibilities occurs:
	\begin{enumerate}
		\item[(i)] $I$ is generated in degree $n-2$, in which case it is a complementary edge ideal,
		\item[(ii)] $I$ is generated in degree $d\in\{n-1,n\}$ in which case it is a matroidal ideal,
		\item[(iii)] $I$ is minimally generated in two degrees $n-2$ and $n-1$.
	\end{enumerate}

	In the case (i), all the claims of Theorem \ref{ThmC} follow from the results in \cite{FM,FM1,FM2}. The case (ii) is trivial as well, because then $I$ is a matroidal ideal, see \cite{HQ,HRV}. Only the case (iii) remains to be addressed. This is accomplished next.
	
	Given a monomial ideal $I\subset S$ and $j\ge0$, let $\mu_j(I)=|\{u\in\mathcal{G}(I):\deg(u)=j\}|$.
	
	\begin{Theorem}\label{Thm:reg-deg>=n-2}
		Let $I\subset S$ be a squarefree monomial ideal minimally generated in two degrees $n-2$ and $n-1$. Let $J=I_{\langle n-2\rangle}$. The following statements hold.
		\begin{enumerate}
			\item[\textup{(a)}] $\reg I^k=(n-1)k$ for all $k\ge1$.\smallskip
			\item[\textup{(b)}] The function $k\mapsto\depth S/I^k$ is non-increasing.\smallskip
			\item[\textup{(c)}] The graded Betti numbers of $I^k$ do not depend on $K$, for all $k\ge1$.\smallskip
			\item[\textup{(d)}] $\depth S/I^k=\depth S/J^k-\mu_{n-1}(I)$ for all $k\ge\dstab(J)+\mu_{n-1}(I)$.\smallskip
			\item[\textup{(e)}] $\dstab(I)\le n-2$.\smallskip
			\item[\textup{(f)}] The following conditions are equivalent.\smallskip
			\begin{enumerate}
				\item[\textup{(i)}] $I^k$ has linear quotients for all $k\ge1$.
				\item[\textup{(ii)}] $I^k$ is componentwise linear for all $k\ge1$.
				\item[\textup{(iii)}] $J=I_c(G)$ with $c(G)=1$.
			\end{enumerate}
		\end{enumerate}
	\end{Theorem}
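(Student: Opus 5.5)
Throughout, write $m=\mu_{n-1}(I)$. The plan is to make the combinatorial structure of $I$ explicit and then induct on $m$, splitting off one degree $n-1$ generator at a time by means of short exact sequences.

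\emph{Structure of $I$.} Every squarefree monomial of degree $n-1$ has the form ${\bf x}_{[n]\setminus\{i\}}$. Moreover, ${\bf x}_{[n]\setminus\{i\}}$ is divisible by ${\bf x}_{[n]\setminus\{i,j\}}$ for every $j\ne i$. So, writing $J=I_c(G)$, minimality of the generating set forces
$$I=I_c(G)+({\bf x}_{[n]\setminus\{i\}}:\ i\in A),$$
where $A\subset[n]$ is a set of \emph{isolated} vertices of $G$ and $|A|=m\ge1$. Put $B=[n]\setminus A$. Every generator of $J$ is then divisible by ${\bf x}_A$.

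\emph{Key exact sequences.} Fix $i\in A$ and set $u_i={\bf x}_{[n]\setminus\{i\}}$. Every generator of $I$ other than $u_i$ is divisible by $x_i$, so
$$I^k+(x_i)=(u_i^k,x_i),$$
which is a complete intersection. Consider the sequence
$$0\to S/(I^k:x_i)(-1)\to S/I^k\to S/(u_i^k,x_i)\to0.$$
The first step is to identify $I^k:x_i$. I expect it to equal $x$-shifted products of the form
$$\sum_{a+b=k}u_i^{a}\,(I')^{b}\ :\ x_i,$$
where $I'$ is the ideal obtained by deleting $u_i$. The cleanest route is to show that
$$I^k:x_i=\bigl(I^{k}\bigr)' + u_i(\ldots),$$
and that this colon is again an ideal of the same type in fewer variables, or a power of such an ideal times a monomial. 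Induction on $m$ then applies, with base case $m=0$ given by the results of \cite{FM,FM1,FM2} for $J$. It is also worth checking whether $I^k=x_iI^k{:}x_i+u_i^k$ is a Betti splitting. If so, the Betti numbers of $I^k$ are sums of those of $I^k:x_i$ and of a principal ideal, and (c) follows at once from the inductive hypothesis and the corresponding statement for $J$.

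\emph{Depth and regularity, parts (a), (b), (d), (e).} The depth lemma and the regularity lemma applied to the sequence above give:
\begin{itemize}
\item $\reg I^k\le\max\{\reg(I^k:x_i)+1,\ (n-1)k\}$, with a matching lower bound because $u_i^k$ is a minimal generator of degree $(n-1)k$;
\item $\depth S/I^k=\min\{\depth S/(I^k:x_i),\ n-2\}$, up to the usual correction by one.
\end{itemize}
For the colon I would use the bound $\reg (J^{a})\le (n-2)a+c(G)-1$ from \cite{FM2} to check that the colon term never exceeds $(n-1)k$; the factor $u_i$ absorbs the excess $c-1$ once $a\ge1$. This gives (a). Monotonicity in (b) follows inductively, since the colon ideals themselves form a non-increasing depth sequence. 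For (d) and (e), each of the $m$ generators outside $J$ should lower the stable depth by exactly $1$, and only after $\dstab(J)+m$ steps. Combined with $\dstab(J)\le n-1-\ldots$ from \cite{FM1} and $|A|\le n$, this should yield $\dstab(I)\le n-2$.

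\emph{Part (f).} The implication (i)$\Rightarrow$(ii) is standard. For (ii)$\Rightarrow$(iii), note that $(I^k)_{\langle(n-2)k\rangle}=J^k$. Componentwise linearity forces $J^k$ to have a linear resolution for all $k$, which by \cite{FM2} means $c(G)=1$. For (iii)$\Rightarrow$(i), I would order the generators of $I^k$ as follows: first $J^k$ in its known linear quotient order from \cite{FM}, then the products $u_i v$ grouped by increasing number of $u$-factors, and check that each colon is generated by variables.

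The main obstacle I expect is computing $I^k:x_i$ precisely and controlling its regularity and depth uniformly in $k$, since this is what drives every part of the statement.
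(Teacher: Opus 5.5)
\paragraph{The main gap: the colon $I^k:x_i$.} Your description of $I$ and the identity $I^k+(x_i)=(u_i^k,x_i)$ are correct. However, the step everything rests on, the colon $I^k:x_i$, is left as an expectation, and that expectation is false.

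Put $B=[n]\setminus A$, $H=G|_B$ and $K=I:x_i={\bf x}_{A\setminus\{i\}}I_c(H)+{\bf x}_B\,({\bf x}_{A\setminus\{i,j\}}:\ j\in A\setminus\{i\})$. Neither $u_i$ nor $K$ involves $x_i$, $I=(u_i)+x_iK$, and $u_i\in\m K$ since $H$ has an edge. Hence $I^k=\sum_{b=0}^k x_i^bu_i^{k-b}K^b$ with coefficient ideals increasing in $b$, and $I^k:x_i=K\,I^{k-1}$. This still contains $I^{k-1}$ with all $m$ generators of degree $n-1$, so your induction on $m$ does not cover it. For instance, for $I=(x_1x_2,\,x_1x_3x_4,\,x_2x_3x_4)$ one gets $I^2:x_1=(x_1x_2^2,\,x_2^2x_3x_4,\,x_1x_2x_3x_4,\,x_2x_3^2x_4^2,\,x_1x_3^2x_4^2)$, which is not a monomial times a power of a squarefree ideal. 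So your inductive derivations of (b) and (c) have nothing to invoke.

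The first-level splitting you suggest is in fact valid. $I^k=x_i(I^k:x_i)+(u_i^k)$ is a Betti splitting with intersection $(x_iu_i^k)$, giving exactly $\depth S/I^k=\min\{\depth S/(KI^{k-1}),\,n-2\}$ and $\reg I^k=\max\{\reg(KI^{k-1})+1,\,(n-1)k\}$, with no correction by one. But it only moves the whole problem into $KI^{k-1}$.

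To finish along your lines you would iterate, $I^k:x_i^j=K^jI^{k-j}$, and prove that each $K^jI^{k-j}=x_iK^{j+1}I^{k-j-1}+u_i^{k-j}K^j$ is a Betti splitting with intersection $x_iu_i^{k-j}K^j$. Tor-vanishing of $u_i^{k-j}K^j\to K^{j+1}I^{k-j-1}$ needs a $\partial$-criterion argument. That argument fails at isolated vertices $y$ of $H$ (there $u_i/y\notin K$) unless these are factored out first. Only then, with $S'$ the polynomial ring without $x_i$, do you get
$\depth S/I^k=\min\{n-2,\ \depth S'/K^j\ (1\le j<k),\ \depth S'/K^k+1\}$
and the analogous regularity formula. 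Induction applies to these because $K$ is of your type in $n-1$ variables with $m-1$ top-degree generators (and $K=I_c(H)$ when $m=1$).

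The paper avoids this recursion by splitting $I^k=\sum_h{\bf x}_A^{k-h}{\bf x}_B^{h}I_c(H)^{k-h}I_x^h$, with $I_x=({\bf x}_A/x_j:\ j\in A)$, one value of $h$ at a time. Each intersection ${\bf x}_A^{k-\ell+1}I_x^{\ell-1}\cdot{\bf x}_B^{\ell}I_c(H)^{k-\ell}$ is then a product of ideals in disjoint sets of variables with known invariants.

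\paragraph{The remaining parts.} These inherit this gap and have further holes.
\begin{itemize}
\item Parts (b), (d), (e) need exact depth values for every $k$. The phrase \emph{each of the $m$ generators should lower the stable depth by exactly $1$} restates (d) rather than proving it.
\item For (a), the input actually needed is $\reg K^j\le(n-2)j$, ultimately $\reg I_c(H)^a\le(|B|-1)a$ from the known regularity formula for powers of complementary edge ideals. The remark about $u_i$ absorbing $c-1$ does not replace it.
\item In (e), $|A|\le n$ and an unspecified bound on $\dstab(J)$ cannot give $n-2$. You need $\dstab(J)=\dstab(I_c(H))\le|B|-2$, after which (d) gives $\dstab(I)\le|B|-2+|A|=n-2$.
\item In (f), (i)$\Rightarrow$(ii)$\Rightarrow$(iii) is fine, and your order for (iii)$\Rightarrow$(i), by increasing number of degree $n-1$ factors, is the paper's. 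But both facts that make it work are missing. First, each block ${\bf x}_A^{k-h}{\bf x}_B^{h}I_c(H)^{k-h}I_x^h$ has linear quotients: it is a product of ideals with linear quotients in disjoint variables, using $c(H)=1$ and that $I_x$ is matroidal. Second, for $v$ in block $\ell$ and $u$ in an earlier block, some $x_j$ with $j\in A$ divides $u:v$. This forces a factor ${\bf x}_A/x_j$ in $v$, so that $w=x_jv/(x_rx_s)$ with $\{r,s\}\in E(H)$ lies in block $\ell-1$ and satisfies $w:v=x_j$.
\end{itemize}
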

	
	Let $I,I_1,I_2\subset S$ be monomial ideals such that $\mathcal{G}(I)=\mathcal{G}(I_1)\sqcup\mathcal{G}(I_2)$. Following \cite[Definition 1.1]{FHT}, we say that $I=I_1+I_2$ is a \textit{Betti splitting} if
	\begin{equation}\label{eq:BettiSplitEq}
		\beta_{i,j}(I)\ =\ \beta_{i,j}(I_1)+\beta_{i,j}(I_2)+\beta_{i-1,j}(I_1\cap I_2),\ \ \ \text{for all}\ i,j\ge0.
	\end{equation}
	In this case, by \cite[Corollary 2.2]{FHT} we have
	\begin{align*}
		\depth S/I\ &=\ \min\{\depth S/I_1,\,\depth S/I_2,\,\depth S/(I_1\cap I_2)-1\},\\
		\reg I\ &=\ \max\{\reg I_1,\,\reg I_2,\,\reg I_1\cap I_2-1\}.
	\end{align*}
	
	Consider the natural short exact sequence $0\rightarrow I_1\cap I_2\rightarrow I_1\oplus I_2\rightarrow I\rightarrow0$. By \cite[Proposition 2.1]{FHT}, equation (\ref{eq:BettiSplitEq}) holds, that is $I=I_1+I_2$ is a Betti splitting, if and only if, the induced maps  
	$$
	\Tor_i^S(K,I_1\cap I_2)\rightarrow\Tor_i^S(K,I_1)\oplus\Tor_i^S(K,I_2)
	$$
	are zero for all $i\ge0$. In this case, we say that the maps $I_1\cap I_2\rightarrow I_1$ and $I_1\cap I_2\rightarrow I_2$ are \textit{$\Tor$-vanishing}. We quote \cite[Proposition 3.1]{EK} (or also \cite[Lemma 4.2]{NV}).
	
	\begin{Lemma}\label{lemma:criteria-bs}
		Let $J,L\subset S$ be non-zero monomial ideals with $J\subset L$. Suppose there exists a map $\varphi:\mathcal{G}(J)\rightarrow\mathcal{G}(L)$ such that for any $\emptyset\ne\Omega\subseteq \mathcal{G}(J)$ we have
		$$
		\lcm(u:u\in\Omega)\in \mathfrak{m}\cdot(\lcm(\varphi(u):u\in\Omega)),
		$$
		where $\mathfrak{m}=(x_1,\dots,x_n)$. Then the inclusion map $J\rightarrow L$ is $\Tor$-vanishing.
	\end{Lemma}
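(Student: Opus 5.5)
The plan is to compute both $\Tor$'s with Taylor resolutions and to lift the inclusion $J\rightarrow L$ to a chain map between them whose coefficients all lie in $\mathfrak{m}$. Write $\mathcal{G}(J)=\{u_1,\dots,u_p\}$ and $\mathcal{G}(L)=\{v_1,\dots,v_q\}$, with fixed total orders. Let $T_J$ and $T_L$ be the Taylor resolutions of $J$ and $L$. The basis of $T_J$ in homological degree $r$ is $e_\Omega$ for $\Omega\subseteq\mathcal{G}(J)$ with $|\Omega|=r+1$, in multidegree $m_\Omega=\lcm(u:u\in\Omega)$. Its differential is
$$\partial(e_\Omega)=\sum_{u\in\Omega}\pm\frac{m_\Omega}{m_{\Omega\setminus\{u\}}}\,e_{\Omega\setminus\{u\}},$$
with the usual alternating signs; $T_L$ is defined in the same way. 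First note that the hypothesis gives $\lcm(\varphi(u):u\in\Omega)\mid m_\Omega$ for every $\Omega$. In particular, taking $\Omega=\{u\}$ gives $\varphi(u)\mid u$, and the quotient $u/\varphi(u)$ lies in $\mathfrak{m}$.

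Next, I view $T_J$ and $T_L$ as the cellular resolutions supported on the full simplices with vertex sets $\mathcal{G}(J)$ and $\mathcal{G}(L)$. The map $\varphi$ on vertices induces a simplicial map $\Delta_{\mathcal{G}(J)}\rightarrow\Delta_{\mathcal{G}(L)}$, and hence the standard chain map on oriented simplicial chains: $e_\Omega$ goes to $\pm e_{\varphi(\Omega)}$ if $\varphi$ is injective on $\Omega$, and to $0$ otherwise. The sign records the permutation needed to reorder $\varphi(\Omega)$. I twist this by the monomial coefficients and define
$$\Phi(e_\Omega)=\pm\frac{m_\Omega}{m_{\varphi(\Omega)}}\,e_{\varphi(\Omega)}$$
when $|\varphi(\Omega)|=|\Omega|$, and $\Phi(e_\Omega)=0$ otherwise. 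This map is homogeneous of multidegree $0$ because $m_{\varphi(\Omega)}\mid m_\Omega$.

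To check that $\Phi$ is a chain map, compare coefficients of $e_{\varphi(\Omega)\setminus\{\varphi(u)\}}$ in $\partial\Phi(e_\Omega)$ and $\Phi\partial(e_\Omega)$. In each case the coefficient is $\frac{m_\Omega}{m_{\varphi(\Omega)\setminus\{\varphi(u)\}}}$ times the same sign, since the underlying simplicial map is a chain map. The degenerate terms cancel exactly as they do for simplicial chains: when $\varphi$ identifies two vertices of $\Omega$, the surviving faces come in pairs with opposite signs and equal monomial coefficients, because both coefficients equal $m_\Omega/m_{\varphi(\Omega)}$. This bookkeeping of signs and degenerate faces is the main obstacle. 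I would handle it by reducing to the classical fact that simplicial maps induce chain maps, observing that all monomial coefficients are forced by multidegree.

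In degree $0$, $\Phi$ sends $e_u$ to $(u/\varphi(u))e_{\varphi(u)}$. Composing with the augmentation gives $u\mapsto u$, so $\Phi$ lifts the inclusion $J\subset L$. Finally, tensor with $K=S/\mathfrak{m}$. For $|\Omega|=|\varphi(\Omega)|$, the hypothesis says $m_\Omega\in\mathfrak{m}\cdot(m_{\varphi(\Omega)})$, so every nonzero coefficient $m_\Omega/m_{\varphi(\Omega)}$ lies in $\mathfrak{m}$. Hence $\Phi\otimes K=0$. Since $\Tor_i^S(K,J)=H_i(T_J\otimes K)$, $\Tor_i^S(K,L)=H_i(T_L\otimes K)$, and the map induced by the inclusion is $H_i(\Phi\otimes K)$, all these maps vanish. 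This is exactly the statement that $J\rightarrow L$ is $\Tor$-vanishing.
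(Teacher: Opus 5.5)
Your proof is correct. The chain map $e_\Omega\mapsto\pm\,(m_\Omega/m_{\varphi(\Omega)})\,e_{\varphi(\Omega)}$ between Taylor resolutions (set to zero when $\varphi$ is not injective on $\Omega$) does lift the inclusion, and the degenerate faces cancel exactly as you say, since both surviving terms carry the same coefficient $m_\Omega/m_{\varphi(\Omega)}$; the hypothesis then puts every coefficient in $\mathfrak{m}$, so $\Phi\otimes K=0$. The paper gives no proof of its own here and only quotes \cite[Proposition 3.1]{EK} and \cite[Lemma 4.2]{NV}, and your Taylor-complex argument is, as far as I recall, essentially the standard proof behind those cited results.
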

	
	Given a monomial ideal $I\subset S$, let $\partial I$ be the ideal generated by the elements $u/x_i$, with $u\in\mathcal{G}(I)$ and $i\in\supp(u)$. The next result follows from \cite[Proposition 4.4]{NV} due to Nguyen and Vu. See also \cite[Lemma 1.3 and Theorem 1.2]{CFL}.
	
	\begin{Lemma}\label{lemma:bs}
		Let $J,L\subset S$ be non-zero monomial ideals. Suppose that $\partial J\subset L$. Then $J\subseteq\mathfrak{m}L$ and there exists a map $\varphi:\mathcal{G}(J)\rightarrow\mathcal{G}(L)$ satisfying the assumptions of Lemma \ref{lemma:criteria-bs}. In particular, the inclusion map $J\rightarrow L$ is $\Tor$-vanishing.
	\end{Lemma}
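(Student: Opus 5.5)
The plan is to construct the map $\varphi:\mathcal{G}(J)\rightarrow\mathcal{G}(L)$ explicitly, by removing from each generator of $J$ its variable of smallest index. Then I check the $\lcm$ condition of Lemma \ref{lemma:criteria-bs} using the smallest index appearing in $\Omega$. Throughout I assume, as is implicit in the statement, that $J\ne S$. Otherwise $\mathcal{G}(J)=\{1\}$, $\partial J=0\subset L$ holds trivially, and the conclusion $J\subseteq\mathfrak{m}L$ fails. So every $u\in\mathcal{G}(J)$ has non-empty support.

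\emph{Construction of $\varphi$.} For $u\in\mathcal{G}(J)$ let $i(u)=\min\supp(u)$. By hypothesis $u/x_{i(u)}\in\partial J\subset L$, so there is some $v\in\mathcal{G}(L)$ with $v\mid u/x_{i(u)}$; fix one such $v$ and set $\varphi(u)=v$. Then $u=x_{i(u)}\cdot(u/x_{i(u)})\in x_{i(u)}(\varphi(u))\subseteq\mathfrak{m}L$. Since this holds for every generator, $J\subseteq\mathfrak{m}L$, which is the first claim.

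\emph{Checking the $\lcm$ condition.} Let $\emptyset\ne\Omega\subseteq\mathcal{G}(J)$, and put $w=\lcm(u:u\in\Omega)$ and $w'=\lcm(\varphi(u):u\in\Omega)$. Since $\varphi(u)\mid u$ for every $u$, we have $w'\mid w$. It then suffices to find one variable $x_j$ whose exponent in $w'$ is strictly smaller than its exponent in $w$, because then $w\in x_j(w')\subseteq\mathfrak{m}(w')$.

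Take $j=\min\bigcup_{u\in\Omega}\supp(u)$, and let $a\ge1$ be the exponent of $x_j$ in $w$. I then bound the exponent of $x_j$ in each $\varphi(u)$, $u\in\Omega$, in two cases.
\begin{enumerate}
\item[(1)] If $x_j\mid u$, then $j$ is the smallest index of $\supp(u)$, so $i(u)=j$. Since $\varphi(u)\mid u/x_j$, the $x_j$-exponent of $\varphi(u)$ is at most $\deg_{x_j}(u)-1\le a-1$.
\item[(2)] If $x_j\nmid u$, then $x_j\nmid\varphi(u)$, so the $x_j$-exponent of $\varphi(u)$ is $0\le a-1$.
\end{enumerate}
Hence the exponent of $x_j$ in $w'$ is at most $a-1$, and the condition of Lemma \ref{lemma:criteria-bs} holds. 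Lemma \ref{lemma:criteria-bs} then gives that the inclusion $J\rightarrow L$ is $\Tor$-vanishing.

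The only delicate point is the choice of the removed variable. It must be made coherently across all generators, so that a single variable witnesses the strict drop for every $\Omega$. Choosing the minimal index of the support, rather than an arbitrary variable, handles this, and the rest is routine.
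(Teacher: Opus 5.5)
Your proof is correct and complete. The paper gives no argument of its own for this lemma: it quotes it from Nguyen--Vu \cite[Proposition 4.4]{NV} (see also \cite[Lemma 1.3 and Theorem 1.2]{CFL}). So your proposal is a self-contained proof of what the paper takes from the literature, not an alternative to an argument in the text.

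The construction does exactly what is needed. You send each $u\in\mathcal{G}(J)$ to a generator of $L$ dividing $u/x_{\min\supp(u)}$, and you test the condition of Lemma~\ref{lemma:criteria-bs} at $x_j$ with $j=\min\bigcup_{u\in\Omega}\supp(u)$. Every $u\in\Omega$ divisible by $x_j$ then has $\min\supp(u)=j$, and every other $u$ has $x_j\nmid\varphi(u)$. Hence $\deg_{x_j}\lcm(\varphi(u):u\in\Omega)<\deg_{x_j}\lcm(u:u\in\Omega)$. Since $\varphi(u)\mid u$ for all $u$, divisibility holds in all other variables, which gives $\lcm(u:u\in\Omega)\in x_j\cdot(\lcm(\varphi(u):u\in\Omega))$. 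The inclusion $J\subseteq\mathfrak{m}L\subseteq L$ needed to invoke Lemma~\ref{lemma:criteria-bs} is also correctly obtained.

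Your caveat about $J=S$ is right and worth keeping. In that case $\partial J=0\subset L$, but $J\not\subseteq\mathfrak{m}L$, so the lemma tacitly requires $J$ to be proper. This does no harm in the proof of Theorem~\ref{Thm:reg-deg>=n-2}, where the lemma is applied to $J={\bf y}_{[q]}^{\ell}I_y^{k-\ell}$ with $\ell\ge1$.
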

	
	The next two basic facts will be used without reference in what follows.
	\begin{Remark}
		Let $I\subset S$ be a graded ideal and let $f\in S$ be a homogeneous element. Then $\reg fI=\reg I+\deg(f)$ and $\depth S/(fI)=\depth S/I$.
	\end{Remark}
	\begin{Remark}
		Let $I_x \subset S_x=K[x_1,\ldots,x_p]$ and $I_y\subset S_y=K[y_1,\ldots,y_q]$ be proper monomial ideals in polynomial rings over disjoint sets of variables. Then, we have $\reg I_xI_y=\reg I_x+\reg I_y$ and $\depth S/(I_xI_y)=\depth S_x/I_x+\depth S_y/I_y+1$. Moreover, if $I_y=(1)$ but $I_x\ne(1)$, then $\depth S/(I_xI_y)=\depth S_x/I_x+q$. Similarly, if $I_x=(1)$ but $I_y\ne(1)$, then $\depth S/(I_xI_y)=\depth S_y/I_y+p$.
	\end{Remark}

    For a monomial $u\in S$ and an integer $i\in[n]$, the $x_i$-degree of $u$ is defined as
    $$
    \deg_{x_i}(u)\ =\ \max\{j:\ x_i^j\ \textup{divides}\ u\}.
    $$
	
	Now, we are in a position to prove Theorem \ref{Thm:reg-deg>=n-2}.
	\begin{proof}[Proof of Theorem \ref{Thm:reg-deg>=n-2}]
		Note that $I_{\langle n-2\rangle}=I_c(G)$ is the complementary edge ideal of some graph $G$. Since $I$ is minimally generated in the degrees $n-2$ and $n-1$, we can find a minimal monomial generator $u\in\mathcal{G}(I)$ of degree $n-1$. Then $u={\bf x}_{[n]}/x_i$ for some $i\in[n]$. We claim that $i$ is an isolated vertex of $G$. Otherwise, we would have $\{i,j\}\in E(G)$ for some $j$, and then ${\bf x}_{[n]}/(x_ix_j)\in I_{\langle n-2\rangle}$ would divide $u$ against the fact that $u$ is a minimal generator. Let $F=\{i:\ {\bf x}_{[n]}/x_i\in\mathcal{G}(I)\}$. Then $F$ is a subset of the isolated vertices of $G$, and therefore ${\bf x}_F$ divides all the monomial generators of $I_c(G)$. Up to relabeling, we may assume that $F=[p]$ for some $p\ge1$. Let $H=G|_{[n]\setminus[t]}$. Then $I_c(G)={\bf x}_{[p]}I_c(H)$. For our convenience, we relabel the variables $\{x_{p+1},\dots,x_n\}$ by $\{y_1,\dots,y_q\}$. Our discussion then shows that we can write
		$$
		I\ =\ {\bf x}_{[t]}I_c(H)+{\bf y}_{[q]}({\bf x}_{[t]}/x_i:\ i\in[t]).
		$$
		We put $I_y=I_c(H)$ and $I_x=({\bf x}_{[t]}/x_i:\ i\in[t])$. Then
		$$
		I^k\ = \ \sum_{h=0}^k{\bf x}_{[p]}^{k-h}{\bf y}_{[q]}^hI_y^{k-h}I_x^h
		$$
		for all $k\ge1$. Fix an integer $k\ge1$, and set $J_\ell=\sum_{h=0}^\ell{\bf x}_{[p]}^{k-h}{\bf y}_{[q]}^hI_y^{k-h}I_x^h$ for $0\le\ell\le k$.
		
		Notice that $J_k=I^k$. For each $0<\ell\le k$, we claim that
		\begin{equation}\label{eq:Iell-bs}
			J_\ell\ =\ J_{\ell-1}+{\bf x}_{[p]}^{k-\ell}{\bf y}_{[q]}^{\ell}I_y^{k-\ell}I_x^{\ell}
		\end{equation}
		is a Betti splitting of $J_\ell$. First, we note that
		$$
		\mathcal{G}(J_\ell)\ =\ \mathcal{G}(J_{\ell-1})\sqcup\mathcal{G}({\bf x}_{[p]}^{k-\ell}{\bf y}_{[q]}^{\ell}I_y^{k-\ell}I_x^{\ell}).
		$$
		
		To this end, let $u\in\mathcal{G}(J_{\ell-1})$ and $v\in\mathcal{G}({\bf x}_{[p]}^{k-\ell}{\bf y}_{[q]}^{\ell}I_y^{k-\ell}I_x^{\ell})$. Then $u\in{\bf x}_{[p]}^{k-h}{\bf y}_{[q]}^{h}I_y^{k-h}I_x^{h}$ for some $0\le h<\ell$. A quick computation shows that
		\begin{align*}
			\sum_{i\in[p]}\deg_{x_i}(u)\ =\ pk-h\ &>\ pk-\ell\ =\ \sum_{i\in[p]}\deg_{x_i}(v),\\
			\sum_{j\in[q]}\deg_{y_j}(u)\ =\ (q-2)k+2h\ &<\ (q-2)k+2\ell\ =\ \sum_{j\in[q]}\deg_{y_j}(v).
		\end{align*}
	    Hence $u$ can not divide $v$ and $v$ can not divide $u$. This proves our claim. Next, we compute the intersection
		\begin{align*}
			L\ =\ J_{\ell-1}\cap({\bf x}_{[p]}^{k-\ell}{\bf y}_{[q]}^{\ell}I_y^{k-\ell}I_x^{\ell})\ &=\ (\sum_{h=0}^{\ell-1}{\bf x}_{[p]}^{k-h}{\bf y}_{[q]}^{h}I_y^{k-h}I_x^{h})\cap({\bf x}_{[p]}^{k-\ell}{\bf y}_{[q]}^{\ell}I_y^{k-\ell}I_x^{\ell})\\
			&=\phantom{\ (}\sum_{h=0}^{\ell-1}[({\bf x}_{[p]}^{k-h}{\bf y}_{[q]}^{h}I_y^{k-h}I_x^{h})\cap({\bf x}_{[p]}^{k-\ell}{\bf y}_{[q]}^{\ell}I_y^{k-\ell}I_x^{\ell})]\\
			&=\phantom{\ (}\sum_{h=0}^{\ell-1}[({\bf x}_{[p]}^{k-h}I_x^{h})\cap({\bf x}_{[p]}^{k-\ell}I_x^{\ell})][({\bf y}_{[q]}^{h}I_y^{k-h})\cap({\bf y}_{[q]}^{\ell}I_y^{k-\ell})].
		\end{align*}
		
		Note that $({\bf x}_{[p]})\subset I_x$ and $({\bf y}_{[q]})\subset I_y$. Given $h<\ell$, we have $k-h>k-\ell$ and so ${\bf x}_{[p]}^{k-h}I_x^h={\bf x}_{[p]}^{k-\ell}{\bf x}_{[p]}^{\ell-h}I_x^h\subset{\bf x}_{[p]}^{k-\ell}I_x^{\ell-h}I_x^h={\bf x}_{[p]}^{k-\ell}I_x^\ell$. Similarly, we have ${\bf y}_{[q]}^\ell I_y^{k-\ell}\subset{\bf y}_{[q]}^h I_y^{k-h}$. Consequently,
		\begin{align*}
			L\ =\ J_{\ell-1}\cap({\bf x}_{[p]}^{k-\ell}{\bf y}_{[q]}^{\ell}I_y^{k-\ell}I_x^{\ell})\ &=\  \sum_{h=0}^{\ell-1}({\bf x}_{[p]}^{k-h}I_x^h{\bf y}_{[q]}^\ell I_y^{k-\ell})\ =\, (\sum_{h=0}^{\ell-1}{\bf x}_{[p]}^{k-h}I_x^h)({\bf y}_{[q]}^\ell I_y^{k-\ell})\\[3pt]
			&=\ {\bf x}_{[p]}^{k-\ell+1}I_x^{\ell-1}{\bf y}_{[q]}^\ell I_y^{k-\ell},
		\end{align*}
		where we used that $({\bf x}_{[p]}^k)\subset{\bf x}_{[p]}^{k-1}I_x\subset\cdots\subset{\bf x}_{[p]}^{k-(\ell-1)}I_x^{\ell-1}$.
		
		To conclude that (\ref{eq:Iell-bs}) is indeed a Betti splitting we must prove that the inclusion maps $L\rightarrow J_{\ell-1}$ and $L\rightarrow{\bf x}_{[p]}^{k-\ell}{\bf y}_{[q]}^{\ell}I_y^{k-\ell}I_x^{\ell}$ are $\Tor$-vanishing.\smallskip
		
		We show that the first map is $\Tor$-vanishing, the other case is similar. First, suppose for a moment that $H$ does not contain isolated vertices. Under this assumption, we claim that $\partial({\bf y}_{[q]}^\ell I_y^{k-\ell})\subset{\bf y}_{[q]}^{\ell-1} I_y^{k+1-\ell}$. Indeed, take $u={\bf y}_{[q]}^\ell u_1\cdots u_{k-\ell}\in\mathcal{G}({\bf y}_{[q]}^\ell I_y^{k-\ell})$ with $u_s\in\mathcal{G}(I_y)$ for all $s=1,\dots,k-\ell$, and let $y_i$ divide $u$. Since we assumed that $H$ does not contain isolated vertices, we have ${\bf y}_{[q]}/(y_iy_j)\in\mathcal{G}(I_y)$ for some $j$. Then
		$$
		{\bf y}_{[q]}^{\ell-1}u_1\cdots u_{k-\ell}({\bf y}_{[q]}/(y_iy_j))\ \ \textup{divides}\ \ u/y_i.
		$$
		
		This shows that $u/y_i\in {\bf y}_{[q]}^{\ell-1} I_y^{k+1-\ell}$. Hence $\partial({\bf y}_{[q]}^\ell I_y^{k-\ell})\subset{\bf y}_{[q]}^{\ell-1} I_y^{k+1-\ell}$. Lemma \ref{lemma:bs} implies that there exists a map $\varphi:\mathcal{G}({\bf y}_{[q]}^\ell I_y^{k-\ell})\rightarrow\mathcal{G}({\bf y}_{[q]}^{\ell-1} I_y^{k+1-\ell})$ such that for any non-empty subset $\Omega\subset\mathcal{G}({\bf y}_{[q]}^\ell I_y^{k-\ell})$ we have
		\begin{equation}\label{eq:lcm}
			\lcm(u:u\in\Omega)\ \in\ \m_y\cdot(\lcm(\varphi(u):u\in\Omega)),
		\end{equation}
		where $\m_y=(y_1,\dots,y_q)$. We claim that such a map $\Phi$ exists also when $H$ contains isolated vertices.
		
		To this end, assume that $H$ contains isolated vertices. Say that these vertices correspond to the variables $y_1,\dots,y_r$ for some $r<q$. Then ${\bf y}_{[q]}^\ell I_y^{k-\ell}={\bf y}_{[r]}^{\ell}{\bf y}_{[q]\setminus[r]}^{\ell}(I_y')^{k-\ell}$ where $I_y'=I_c(H')$ and $H'$ is the graph obtained from $H$ by removing its isolated vertices. Similarly, ${\bf y}_{[q]}^{\ell-1} I_y^{k-\ell+1}={\bf y}_{[r]}^{\ell-1}{\bf y}_{[q]\setminus[r]}^{\ell-1}(I_y')^{k-\ell+1}$. Repeating the argument as before, we obtain a map $\Phi':\mathcal{G}({\bf y}_{[q]\setminus[r]}^{\ell}(I_y')^{k-\ell})\rightarrow\mathcal{G}({\bf y}_{[q]\setminus[r]}^{\ell-1}(I_y')^{k-\ell+1})$ satisfying the property (\ref{eq:lcm}). Then, we define the map $\varphi:\mathcal{G}({\bf y}_{[q]}^\ell I_y^{k-\ell})\rightarrow\mathcal{G}({\bf y}_{[q]}^{\ell-1} I_y^{k+1-\ell})$ by setting $\Phi(u)={\bf y}_{[r]}^{\ell-1}\Phi(u/{\bf y}_{[r]}^{\ell})$. It follows immediately that the property (\ref{eq:lcm}) is again satisfied.\smallskip
		
		Having acquired the existence of the map $\Phi$ for any graph $H$, we define a map
		$$
		\Phi:\mathcal{G}(L)\rightarrow\mathcal{G}(J_{\ell-1})
		$$
		using the map $\varphi$. Let $u\in\mathcal{G}(L)$. Then $u=u_xu_y$, where $u_x\in\mathcal{G}({\bf x}_{[p]}^{k+1-\ell}I_x^{\ell-1})$ and $u_y\in\mathcal{G}({\bf y}_{[q]}^\ell I_y^{k-\ell})$. We put $\Phi(u)=\varphi(u_y)u_x$. Note that $\Phi$ is well-defined, because $\Phi(u)\in\mathcal{G}(J_{\ell-1})$. Finally, let $\Omega\subset\mathcal{G}(L)$ be a non-empty subset. We can write $\lcm(u:u\in\Omega)= \lcm(u_x:u\in\Omega)\cdot\lcm(u_y:u\in\Omega)$ and by using (\ref{eq:lcm}) we have
		\begin{align*}
			\lcm(u:u\in\Omega)\ \in&\ \m_y\cdot(\lcm(\varphi(u_y):u\in\Omega))\cdot\lcm(u_x:u\in\Omega)\\
			=&\ \m_y\cdot(\lcm(\varphi(u_y)u_x:u\in\Omega))\\
			\subset&\ \m\cdot(\lcm(\Phi(u):u\in\Omega)),
		\end{align*}
		where $\m=(x_1,\dots,x_p,y_1,\dots,y_q)$. Lemma \ref{lemma:criteria-bs} implies that the inclusion $L\rightarrow J_{\ell-1}$ is indeed $\Tor$-vanishing, as desired.\smallskip
		
		Let $S_x=K[x_1,\dots,x_p]$ and $S_y=K[y_1,\dots,y_q]$. Next, we claim that
		\begin{align}
			\label{eq:regbs}\reg I^k=\max&\left\{\substack{\displaystyle\reg I_y^k+pk,(n-1)k,\\[5pt]\displaystyle\max_{0<h<k}\{\reg I_y^{k-h}\!+pk+(q-1)h\}}\right\},\\[4pt]
			\label{eq:depthbs}\depth\frac{S}{I^k}=\min&\left\{\substack{\displaystyle\!\depth S_y/I_y^k\!+p,\depth S_x/I_x^k\!+q,\depth S_x/I_x^{k-1}\!+q-1,\\[5pt]\displaystyle\min_{0<h<k}\{\depth S_y/I_y^{k-h}+\depth S_x/I_x^h+1\},\\[3pt]\displaystyle\min_{0<h<k}\{\depth S_y/I_y^{k-h}+\depth S_x/I_x^{h-1}\}}\right\}.
		\end{align}
		
		Since (\ref{eq:Iell-bs}) is a Betti splitting and $I^k=J_k$, we have
		\begin{align*}
			\reg I^k&=\max\{\reg J_{k-1},\,\reg({\bf y}_{[q]}^kI_x^k),\,\reg({\bf x}_{[p]}{\bf y}_{[q]}^kI_x^{k-1})-1\},\\
			\depth S/I^k&=\min\{\depth S/J_{k-1},\,\depth S/({\bf y}_{[q]}^kI_x^k),\,\depth S/({\bf x}_{[p]}{\bf y}_{[q]}^kI_x^{k-1})-1\}.
		\end{align*}
		
		Since $I_x\subset S_x$ is the squarefree Veronese ideal of $S_x$ generated in degree $p-1$ and $\dim(S_x)=p$, we have $\reg I_x^h=(p-1)h$ and the function $k\mapsto\depth S_x/I_x^h$ is non-increasing, for all $h\ge1$. Hence $\reg({\bf y}_{[q]}^kI_x^k)=qk+(p-1)k=(n-1)k$ for all $k\ge1$. Analogously, $\reg({\bf y}_{[q]}{\bf x}_{[p]}^kI_x^{k-1})-1=(n-1)k$. Similar computations can be performed for the depth. Hence,
		\begin{align*}
			\reg I^k&=\max\{\reg J_{k-1},\,(n-1)k\},\\
			\depth S/I^k&=\min\{\depth S/J_{k-1},\,\depth S_x/I_x^k+q,\,\depth S_x/I_x^{k-1}+q-1\}.
		\end{align*}
		
		Iterating these computations to $J_{k-1},\dots,J_0$ by using the Betti splittings (\ref{eq:Iell-bs}), we see that the formulas (\ref{eq:regbs}) and (\ref{eq:depthbs}) indeed hold.\smallskip
		
		Now, we proceed to prove the statements (a), (b), (c), (d), (e) and (f).\smallskip
		
		(a) Let $c=c(H)$ be the number of connected components of $H$ which are not isolated vertices. By \cite[Theorem 4.1]{FM1} we have
		\begin{equation}\label{IndI1}
			\reg I_y^k\ =\ \begin{cases}
				(q-1)k&\textit{if}\ \ 1\le k\le c-2,\\
				(q-2)k+c-1&\textit{if}\ \ k\ge c-1.
			\end{cases} 
		\end{equation}
		
		From this formula it follows that $\reg I_y^k\le(q-1)k$ for all $k\ge1$. Hence, for all $k\ge1$ and all $0<h<k$, we have
		\begin{align*}
			\reg I_y^k+pk\ &\le\ (q-1)k+pk\ =\ (n-1)k,\\
			\reg I_y^{k-h}+pk+(q-1)h\ &\le\ (q-1)(k-h)+pk+(q-1)h\ =\ (n-1)k.
		\end{align*}
		
		These inequalities and equation (\ref{eq:regbs}) imply that $\reg I^k=(n-1)k$ for all $k\ge1$.\smallskip
		
		(b) By \cite[Theorem 4.1]{FM1}, the function $k\mapsto\depth S_y/I_y^k$ is non-increasing. Moreover $k\mapsto\depth S_x/I_x^k$ is non-increasing as well, because $I_x$ is matroidal. It follows at once from   formula (\ref{eq:depthbs}) that $k\mapsto\depth S/I^k$ is non-increasing too.\smallskip
		
		(c) By \cite[Corollary 4.7]{FM1}, the graded Betti numbers of $I_y^k$ do not depend on the ground field $K$ for all $k\ge1$. The same independence occurs for the powers of $I_x$ because it is a matroidal ideal. Consider the Betti splitting (\ref{eq:Iell-bs}). We computed before that $L=J_{\ell-1}\cap({\bf x}_{[p]}^{k-\ell}{\bf y}_{[q]}^{\ell}I_y^{k-\ell}I_x^{\ell})={\bf x}_{[p]}^{k-\ell+1}I_x^{\ell-1}{\bf y}_{[q]}^\ell I_y^{k-\ell}$. This ideal is the product of the ideals ${\bf x}_{[p]}^{k-\ell+1}I_x^{\ell-1}$ and ${\bf y}_{[q]}^\ell I_y^{k-\ell}$ which have disjoint supports and whose graded Betti numbers do not depend on $K$. Hence the graded Betti numbers of $L$ do not depend on $K$. Finally, the Betti splitting (\ref{eq:Iell-bs}) for $\ell=k$ shows that the graded Betti numbers of $I^k$ do not depend on $K$ for all $k\ge1$.\smallskip
		
		(d) We denote by $b(H)$ the number of bipartite connected components of the graph $H$, we remark that an isolated vertex is regarded as a bipartite connected component. By \cite[Theorem 4.1]{FM2}, $\depth S_y/I_y^k=\depth S_y/I_c(H)^k=b(H)$ for all $k\ge\dstab(I_y)$. Next, note that $\depth S_x/I_x^{p-1}=0$. To this end, consider the monomial $u={\bf x}_{[p]}^{p-2}$. It does not belong to $I_x^{p-1}$, because $\deg(u)=p(p-2)$ is strictly less than $(p-1)^2$ which is the initial degree of $I_x^{p-1}$. On the other hand, for all $i\in[p]$ we have
		$$
		x_iu\ =\ x_i{\bf x}_{[p]}^{p-2}\ =\ \prod_{\substack{j\in[p]\\ j\ne i}}({\bf x}_{[p]}/x_j)\ \in\ I_x^{p-1}.
		$$ 
		Hence $(I_x^{p-1}:u)=(x_1,\dots,x_p)\in\Ass_{S_x}(I_x^{p-1})$ and so $\depth S_x/I_x^{p-1}=0$. Since $I_x$ is a matroidal ideal, we have $\depth S_x/I_x^{k}=0$ as well for all $k\ge p-1=\mu_{n-1}(I)-1$.
		
		Let $k\ge\dstab(I_y)+\mu_{n-1}(I)$. The previous discussion and equation (\ref{eq:depthbs}) yield that
		$$
			\depth S/I^k\ =\ \min\left\{\substack{\displaystyle b(H)+p,\,q,\,q-1,\\[5pt]\displaystyle b(H)+1,\,b(H)}\right\}\ =\ \min\{b(H),\,q-1\}\ =\ b(H),
		$$
	    because $q-1=|V(H)|-1\ge b(H)$ since $H$ has at least one edge.
	    
	    By \cite[Theorem 4.1]{FM2}, $\depth S/I_{\langle n-2\rangle}^k=\depth S/({\bf x}_{[p]}I_y)^k=\depth S/I_c(G)^k=b(G)$ for all $k\ge\dstab(I_y)+\mu_{n-1}(I)$. Recall that $H$ is the graph obtained from $G$ by removing the isolated vertices $1,\dots,p$. Hence $b(H)=b(G)-p$. Altogether, $$\depth S/I^k\ =\ b(H)\ =\ b(G)-p\ =\ \depth S/I_{\langle n-2\rangle}^k-\mu_{n-1}(I)$$ for all $k\ge\dstab(I_{\langle n-2\rangle})+\mu_{n-1}(I)$.\smallskip
	    
	    (e) Part (d) implies that $\dstab(I)\le\dstab(I_y)+p$. By \cite[Theorem 4.1]{FM2}, we have $\dstab(I_y)\le q-2$. Since $p+q-2=n-2$, the assertion follows.\smallskip
	    
	    (f) The implication (i) $\Rightarrow$ (ii) is true for any monomial ideal. Assume (ii). Since $n-2$ is the initial degree of $I$, $J=I_c(G)$ has linear powers. By \cite[Theorem B]{FM} we have $c(G)=1$ and (iii) follows. Finally, we show that (iii) $\Rightarrow$ (i). Assume that $J=I_c(G)$ with $c(G)=1$. Again by \cite[Theorem B]{FM}, $J^k$ has linear quotients for all $k\ge1$. Since (\ref{eq:Iell-bs}) is a Betti splitting, by what we proved before we have
	    \begin{equation}\label{eq:G-disjoint}
	    	\mathcal{G}(I^k)\ =\ \bigsqcup_{h=0}^k\mathcal{G}({\bf x}_{[p]}^{k-h}{\bf y}_{[q]}^hI_y^{k-h}I_x^h),\quad\textup{for all}\ k\ge1.
	    \end{equation}
	    Notice that $c(H)=1$ since $c(G)=1$. Hence $I_y^k=I_c(H)^k$ has linear quotients for all $k\ge1$. Since $I_x$ is a matroidal ideal, then also $I_x^k$ has linear quotients for all $k\ge1$. Since $\supp(I_x)\cap\supp(I_y)=\emptyset$, \cite[Lemma 4.7(b)]{FM-Poly} implies that $I_x^hI_y^{k-h}$ has linear quotients for all $k\ge1$ and all $0\le h\le k$. Multiplying an ideal with linear quotients by a monomial preserves the linear quotients property. Hence ${\bf x}_{[p]}^{k-h}{\bf y}_{[q]}^hI_y^{k-h}I_x^h$ has linear quotients for all $k\ge1$ and all $0\le h\le k$. Fix $k\ge1$ and for each $0\le h\le k$ fix a linear quotients order $>_h$ of ${\bf x}_{[p]}^{k-h}{\bf y}_{[q]}^hI_y^{k-h}I_x^h$. In view of equation (\ref{eq:G-disjoint}), we order the monomials in $\mathcal{G}(I^k)$ as follows. Let $u,v\in\mathcal{G}(I^k)$. Then $u\in\mathcal{G}({\bf x}_{[p]}^{k-h}{\bf y}_{[q]}^hI_y^{k-h}I_x^h)$ and $v\in\mathcal{G}({\bf x}_{[p]}^{k-\ell}{\bf y}_{[q]}^{\ell}I_y^{k-\ell}I_x^{\ell})$ for some $h$ and $\ell$. We put
	    $$
	    u>v\quad\Longleftrightarrow\quad h<\ell\quad\text{or}\quad h=\ell\quad\text{and}\quad u>_hv.
	    $$
	    
	    Let $u,v\in\mathcal{G}(I^k)$ with $u>v$ as described before. If $h=\ell$, since ${\bf x}_{[p]}^{k-h}{\bf y}_{[q]}^hI_y^{k-h}I_x^h$ has linear quotients, there exists $w\in\mathcal{G}({\bf x}_{[p]}^{k-h}{\bf y}_{[q]}^hI_y^{k-h}I_x^h)\subset\mathcal{G}(I^k)$ with $w>_hv$ and so also $w>v$ such that $w:v=\lcm(w,v)/v$ is a variable that divides $u:v$. Now, assume that $h<\ell$. As we computed before, we have $\sum_{i\in[p]}\deg_{x_i}(u)=pk-h>pk-\ell=\sum_{i\in[p]}\deg_{x_i}(v)$. Hence, there exists $i\in[p]$ such that $x_i$ divides $u:v$. We can write $v={\bf x}_{[p]}^{k-\ell}{\bf y}_{[q]}^\ell v_1\cdots v_{k-\ell}v_1'\cdots v_\ell'$ with $v_j\in\mathcal{G}(I_y)$ and $v_j'\in\mathcal{G}(I_x)$ for all $j$. Since $x_i$ divides $u:v$ and $I_x=({\bf x}_{[p]}/x_j:\ j\in[p])$, we must have $v_t'={\bf x}_{[p]}/x_i$ for some $t$. Up to relabeling, we may assume that $t=\ell$. Let ${\bf y}_{[q]}/(y_ry_s)\in\mathcal{G}(I_y)$ be a monomial generator. We consider the following monomial
	    \begin{align*}
	    	w\ &=\ x_i(v/(y_ry_s))\\
	    	&=\ {\bf x}_{[p]}^{k-(\ell-1)}{\bf y}_{[q]}^{\ell-1}v_1\cdots v_{k-\ell}{\bf y}_{[q]}/(y_ry_s)v_1'\cdots v_{\ell-1}'\in\mathcal{G}({\bf x}_{[p]}^{k-(\ell-1)}{\bf y}_{[q]}^{\ell-1}I_y^{k-(\ell-1)}I_x^{\ell-1}).
	    \end{align*}
	    Then $w>v$ by definition of the order $>$, and $w:v=x_i$ divides $u:v$. We conclude that $I^k$ has indeed linear quotients and (i) follows.
	\end{proof}
	
	\section{The stable set of associated primes of $I_c(G)$}\label{sec3}
	
	In this section, we prove the theorem that gives the title of the paper. We begin with the following observation.
	
	\begin{Proposition}\label{Prop:local-I_c(G)}
		Let $G$ be a finite simple graph on $[n]$, and let $F\subset[n]$. Put
		$$
		A_F\ =\ \{i\in[n]:\ \deg_{G|_F}(i)=0\ \text{and}\ \{i,j\}\in E(G)\ \text{for some}\ j\in[n]\}.
		$$
		Then
		$$
		(I_c(G))(P_F)\ =\ \begin{cases}
			I_c(G|_F)+({\bf x}_{F}/x_i\ :\ i\in A_F)&\text{if}\ (\bigcup_{e\in E(G)}e)\cap F\ne\emptyset,\\
			({\bf x}_F)&\text{otherwise}.
		\end{cases}
		$$
	\end{Proposition}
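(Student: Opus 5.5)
The plan is to compute $\varphi_F$ on each minimal generator of $I_c(G)$, and then discard the images that are redundant. Throughout, I read $A_F$ as a subset of $F$: the vertices $i\in F$ that are isolated in $G|_F$ but not isolated in $G$. This is forced by the appearance of ${\bf x}_F/x_i$. I also assume $E(G)\neq\emptyset$, since otherwise $I_c(G)=0$ and there is nothing to prove.

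\emph{Step 1 (images of generators).} Since $\varphi_F$ is multiplicative on monomials and $\mathcal{G}(I_c(G))$ generates $I_c(G)$, the ideal $(I_c(G))(P_F)$ is generated by the monomials $\varphi_F({\bf x}_{[n]}/(x_ix_j))$ with $\{i,j\}\in E(G)$. Exactly as in the computation in the proof of the persistence theorem of Section~\ref{sec1}, we get
$$
\varphi_F({\bf x}_{[n]}/(x_ix_j))\ =\ {\bf x}_F/{\bf x}_{e\cap F},\qquad e=\{i,j\}.
$$
So there are three types of generator. If $e\subset F$, the image is ${\bf x}_F/(x_ix_j)$, and these monomials generate precisely $I_c(G|_F)$. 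If $e\cap F=\{i\}$, the image is ${\bf x}_F/x_i$. If $e\cap F=\emptyset$, the image is ${\bf x}_F$.

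\emph{Step 2 (the degenerate case).} If no edge of $G$ meets $F$, only the third type occurs. Then $(I_c(G))(P_F)=({\bf x}_F)$, which is the second line of the formula.

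\emph{Step 3 (removing redundant generators).} Now assume some edge $e_0$ meets $F$. The image of $e_0$ is ${\bf x}_F/{\bf x}_{e_0\cap F}$, a proper divisor of ${\bf x}_F$, so every generator of the third type is redundant.

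For a generator ${\bf x}_F/x_i$ of the second type, consider the vertex $i\in F$. If $i$ has a neighbour $j\in F$, then ${\bf x}_F/(x_ix_j)\in I_c(G|_F)$ divides ${\bf x}_F/x_i$, so this generator is redundant. Otherwise $\deg_{G|_F}(i)=0$, and $i$ has the neighbour outside $F$ that produced the generator, so $i\in A_F$.

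Conversely, let $i\in A_F$. Then $i$ has some neighbour $j$ in $G$, and $j\notin F$ because $i$ is isolated in $G|_F$. Hence the edge $\{i,j\}$ has type two and contributes ${\bf x}_F/x_i$.

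Since discarding redundant generators does not change the ideal, this gives
$$
(I_c(G))(P_F)=I_c(G|_F)+({\bf x}_F/x_i:\ i\in A_F).
$$

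The main obstacle is bookkeeping rather than mathematics. One must be careful about the interpretation $A_F\subset F$, and about why the pure ${\bf x}_F$ generators disappear exactly when some edge meets $F$. The latter holds because any such edge yields a proper divisor of ${\bf x}_F$.
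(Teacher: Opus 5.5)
Your proof is correct and is essentially the paper's argument: evaluate $\varphi_F$ on each ${\bf x}_{[n]}/{\bf x}_e$ according to how $e$ meets $F$, then discard as redundant the images ${\bf x}_F$ and those ${\bf x}_F/x_i$ with $\deg_{G|_F}(i)>0$. The paper additionally checks, via the degree bound $|F|-2$, that the surviving generators are minimal, which the statement does not require.

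Two small remarks. First, your reason why ${\bf x}_F$ drops out (any edge meeting $F$ has image a proper divisor of ${\bf x}_F$) is more accurate than the paper's, which invokes an edge lying inside $F$ that need not exist. Second, when $E(G)=\emptyset$ the displayed formula is not vacuous but false ($0$ versus $({\bf x}_F)$), so that case is an implicit hypothesis of the statement rather than one with nothing to prove.
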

    \begin{proof}
    	Let $I=I_c(G)$, and take a monomial $u\in\mathcal{G}(I)$. Then, $u={\bf x}_{[n]}/{\bf x}_e$, for some edge $e=\{i,j\}\in E(G)$. Let $\varphi=\varphi_F$ be the map defined in the introduction. We have
    	\begin{equation}\label{eq:calculation}
    		\varphi(u)\ =\ \begin{cases}
    			\,{\bf x}_F&\textup{if}\ i,j\notin F,\\
    			\,{\bf x}_F/x_i&\textup{if}\ i\in F,\,j\notin F,\\
    			\,{\bf x}_F/x_j&\textup{if}\ i\notin F,\,j\in F,\\
    			\,{\bf x}_F/{\bf x}_e&\textup{if}\ i,j\in F.
    		\end{cases}
    	\end{equation}
    	
    	Now, we distinguish the two possible cases.\smallskip
    	
    	\textbf{Case 1.} Assume that $(\bigcup_{e\in E(G)}e)\cap F=\emptyset$. Then the calculation (\ref{eq:calculation}) shows that $\varphi(v)={\bf x}_F$ for all $v\in\mathcal{G}(I)$. Hence $I(P_F)=({\bf x}_F)$.\smallskip
    	
    	\textbf{Case 2.} Assume that $(\bigcup_{e\in E(G)}e)\cap F\ne\emptyset$. Let $H=G|_F$. Then, for any $e\in E(H)$, we have $e\in E(G)$, and equation (\ref{eq:calculation}) shows that ${\bf x}_F/{\bf x}_e\in I(P_F)$. Equation (\ref{eq:calculation}) also shows that any monomial in $I(P_F)$ has degree at least $|F|-2$. Hence, all monomials ${\bf x}_F/{\bf x}_e\in\mathcal{G}(I_c(H))$ are minimal monomial generators of $I(P_F)$. Since, by assumption, ${\bf x}_F/{\bf x}_e\in I(P_F)$ for at least one edge $e\in E(H)$, then ${\bf x}_F\notin\mathcal{G}(I(P_F))$. Hence, besides the minimal monomial generators coming from $I_c(H)$, the only possibly extra minimal monomial
    	 generators of $I(P_F)$ have degree $|F|-1$, and therefore are of the form $w={\bf x}_F/x_i$ for some $i\in F$. Equation (\ref{eq:calculation}) implies that $w$ belongs to $I(P_F)$ if and only if there exists $j\in[n]$ such that $\{i,j\}\in E(G)$. Moreover, $w$ is a minimal generator of $I(P_F)$ if and only if $\{i,j\}\notin E(H)$ for all $j\in F\setminus\{i\}$. That is, if and only if $\deg_{G|_F}(i)=0$. Altogether, $w={\bf x}_F/x_i\in\mathcal{G}(I(P_F))$ if and only if $i\in A_F$.
    \end{proof}
	
	As a consequence, we can prove Theorem \ref{ThmA}.
	\begin{proof}[Proof of Theorem \ref{ThmA}]
		First of all, note that if $i\in V(G)$ is an isolated vertex of $G$, and we let $H=G|_{[n]\setminus\{i\}}$, then $I_c(G)^k=x_i^kI_c(H)^k=(x_i^k)\cap I_c(H)^k$ for all $k\ge1$. Hence $\Ass(I_c(G)^k)=\{(x_i)\}\cup\Ass(I_c(H)^k)$ for all $k\ge1$. We may therefore assume without loss of generality that $G$ does not contain isolated vertices. Now, let $F\subset[n]$ with $|F|\ge2$. By Theorem \ref{ThmB}, $P_F\in\Ass^\infty(I_c(G))$ if and only if $\depth S_F/(I_c(G)^k(P_F))=0$ for all $k\gg0$. By Proposition \ref{Prop:local-I_c(G)}, since we assumed that $G$ does not contain isolated vertices (hence $\bigcup_{e\in E(G)}e=[n]$), we have
		$$
		I_c(G)^k(P_F)\ =\ (I_c(G)(P_F))^k\ =\ \Big(I_c(G|_F)+({\bf x}_F/x_i:\ \deg_{G|_F}(i)=0)\Big)^k.
		$$
		
		Now, we distinguish the two possible cases.\smallskip
		
		\textbf{Case 1.} Suppose that $\widetilde{b}(G|_F)=b(G|_F)$. Then $G|_F$ does not contain isolated vertices, and so $I_c(G)^k(P_F)=I_c(G|_F)^k$. By \cite[Theorem 4.1]{FM2}, $\depth S_F/I_c(G|_F)^k=b(G)=\widetilde{b}(G)$ for all $k\ge|F|-2$. Hence, $P_F\in\Ass^\infty(I_c(G))$ if and only if $\widetilde{b}(G)=0$.\smallskip
		
		\textbf{Case 2.} Suppose that $\widetilde{b}(G|_F)\ne b(G|_F)$. Then $G|_F$ contains some isolated vertex. In this case, Theorem \ref{Thm:reg-deg>=n-2}(d) and \cite[Theorem 4.1]{FM2} imply that
		\begin{align*}
			\depth S_F/(I_c(G)^k(P_F))\ &=\ \depth S_F/I_c(G|_F)^k-\mu_{|F|-1}(I_c(G)^k(P_F))\\[2pt]
			&=\ b(G)-|\{i\in F:\ \deg_{G|_F}(i)=0\}|\\
			&=\ \widetilde{b}(G).
		\end{align*}
		Hence, once again $P_F\in\Ass^\infty(I_c(G))$ if and only if $\widetilde{b}(G)=0$.\smallskip
		
		It remains to prove that $\astab(I_c(G))\le n-2$. To this end, let $P_F\in\Ass^\infty(I_c(G))$. If $F=\{i\}$ is a singleton, then $i$ is an isolated vertex of $G$, and so $P_F\in\Ass(I_c(G)^k)$ for all $k\ge1$. Otherwise, let $|F|\ge2$. Then $\depth S_F/(I_c(G)^k(P_F))=0$ for all $k\gg0$. By Theorem \ref{Thm:reg-deg>=n-2}(e) and \cite[Theorem 4.1]{FM2}, we have $\depth S_F/(I_c(G)^k(P_F))=0$ for all $k\ge|F|-2$. Hence
		$$
		\astab(I_c(G))\ \le\ \max\{1,\max_{\substack{P_F\in\Ass^\infty(I_c(G))\\ |F|\ge2}}\{|F|-2\}\}\ \le\ n-2,
		$$
		as desired.
	\end{proof}
	
	The proof of the theorem gives the following additional information.
	\begin{Corollary}\label{Cor:astabP_FI_c(G)}
		Let $G$ be a finite simple graph on $[n]$. Let $F\subset[n]$ such that $|F|\ge2$ and $\widetilde{b}(G|_F)=0$. Then
		$$
		P_F\in\Ass(I_c(G)^k),\quad\text{for all}\ k\ge\max\{1,|F|-2\}.
		$$
	\end{Corollary}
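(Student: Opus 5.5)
The plan is to localize at $P_F$ and reduce to a depth computation in $S_F$. Since $\varphi_F$ is multiplicative on monomials, $I_c(G)^k(P_F)=(I_c(G)(P_F))^k$. Because $P_F$ is the maximal ideal of $S_F$, the condition $P_F\in\Ass(I_c(G)^k)$ is equivalent to $\depth S_F/(I_c(G)(P_F))^k=0$. So it suffices to show that this depth vanishes for every $k\ge\max\{1,|F|-2\}$.

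We describe $I_c(G)(P_F)$ using Proposition \ref{Prop:local-I_c(G)}. Since $|F|\ge2$ and $\widetilde b(G|_F)=0$, the graph $G|_F$ cannot be edgeless: two or more isolated vertices form bipartite components, but $\widetilde b$ counts only components with more than one vertex, so we must argue separately. If $G|_F$ had no edges, then every component would be a single vertex and $\widetilde b(G|_F)=0$ would hold vacuously. The corollary's hypothesis must therefore be read together with the description of $\Ass^\infty$ in Theorem \ref{ThmA}. In that case the localization is either $({\bf x}_F)$ or the squarefree Veronese-type ideal $({\bf x}_F/x_i: i\in A_F)$. I would treat this degenerate case directly: the ideal is either principal, with depth $|F|-1>0$, or matroidal, where the argument of Theorem \ref{Thm:reg-deg>=n-2}(d) gives depth $0$ once $k\ge|F|-1$. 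This case needs checking against the intended reading, and I expect it to be the delicate point.

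In the main case $G|_F$ has an edge, and $I(P_F)=I_c(G|_F)+({\bf x}_F/x_i:i\in A_F)$.

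\textbf{Case 1:} $A_F=\emptyset$, so $I(P_F)=I_c(G|_F)$. Write $G|_F$ as $p$ isolated vertices plus a graph $H$ without isolated vertices. Then $I_c(G|_F)^k={\bf x}^k\,I_c(H)^k$ with $H$ on $|F|-p$ vertices. By \cite[Theorem 4.1]{FM2}, $\depth S_F/I_c(G|_F)^k=b(G|_F)$ for $k\ge\dstab\le |F|-2$. Since $\widetilde b=0$, this equals the number of isolated vertices of $G|_F$. Any isolated vertex $i$ of $G|_F$ lies in $A_F$, unless it is isolated in $G$, and then it contributes only the trivial prime $(x_i)$. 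Handling that contribution via the isolated-vertex reduction at the start of the proof of Theorem \ref{ThmA} reduces us to depth $0$.

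\textbf{Case 2:} $A_F\neq\emptyset$. Theorem \ref{Thm:reg-deg>=n-2}(d) gives
\[
\depth S_F/I(P_F)^k=\depth S_F/I_c(G|_F)^k-|A_F|=b(G|_F)-|A_F|=\widetilde b(G|_F)=0
\]
for $k\ge\dstab(I_c(G|_F))+|A_F|$.

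The main obstacle is the threshold. Part (d) yields vanishing only from $\dstab(J)+\mu$ on, and I must improve this to $k\ge|F|-2$. I would reopen formula (\ref{eq:depthbs}) with $p=|A_F|$ and $q=|F|-p$. Depth zero there is already achieved by the term $\depth S_y/I_y^{k-h}+\depth S_x/I_x^{h-1}$ when both summands vanish. That requires $h-1\ge p-1$ and $k-h\ge q-2$ (the dstab bound from \cite{FM2} with $b(H)=0$), hence $k\ge p+q-2=|F|-2$. Finally, the non-increasing property in Theorem \ref{Thm:reg-deg>=n-2}(b) propagates the vanishing to all larger $k$.
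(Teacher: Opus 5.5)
Your argument coincides with the paper's when $G|_F$ has an edge and no vertex of $F$ is isolated in $G$. You localize with Proposition~\ref{Prop:local-I_c(G)}, read off the stable depth $\widetilde b(G|_F)=0$ from \cite[Theorem 4.1]{FM2} or Theorem~\ref{Thm:reg-deg>=n-2}(d), and bound the threshold by $|F|-2$. Reopening (\ref{eq:depthbs}) with $h=p$ only reproves Theorem~\ref{Thm:reg-deg>=n-2}(e) in $S_F$, i.e.\ $\dstab(I_c(G)(P_F))\le|F|-2$, which the paper cites directly. Your version works because $b(H)=0$ forces $q\ge3$, hence $h=p<k$.

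The genuine gap is in every other case, and it cannot be closed, because the statement is false there. Your Case~1 claim that the isolated-vertex reduction ``reduces us to depth $0$'' is wrong. If $i\in F$ is isolated in $G$, then $I_c(G)^k=(x_i^k)\cap I_c(G|_{[n]\setminus\{i\}})^k$. Hence $\Ass(I_c(G)^k)=\{(x_i)\}\cup\Ass(I_c(G|_{[n]\setminus\{i\}})^k)$, and $P_F$ is never associated once $|F|\ge2$; your own computation gives depth $b(G|_F)>0$. For instance, take $G=K_3$ on $\{1,2,3\}$ plus the isolated vertex $4$, and $F=[4]$. Then $\widetilde b(G|_F)=0$, yet $I_c(G)^k=x_4^k(x_1,x_2,x_3)^k$ has only the associated primes $(x_4)$ and $(x_1,x_2,x_3)$. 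The same problem sits in your Case~2 identity $b(G|_F)-|A_F|=\widetilde b(G|_F)$, which needs every isolated vertex of $G|_F$ to lie in $A_F$.

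The ``delicate point'' you flagged is also an actual counterexample, not a matter of reading. Suppose $F$ is independent in $G$ and contains no isolated vertex of $G$. Then $I_c(G)(P_F)=(\mathbf{x}_F/x_i:\ i\in F)$, and a monomial $x^b$ lies in its $k$-th power iff $\sum_{i\in F}(k-b_i)^+\le k$. From this one checks that $P_F\in\Ass(I_c(G)^k)$ iff $k\ge|F|-1$. Concretely, let $G=3K_2$ with edges $\{1,2\},\{3,4\},\{5,6\}$ and $F=\{1,3,5\}$. The localization of $I_c(G)$ is $(x_1x_3,x_1x_5,x_3x_5)$, of depth $1$, so $P_F\notin\Ass(I_c(G))$ although $\max\{1,|F|-2\}=1$.

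So what your argument (and the paper's) actually proves is the corollary under two extra hypotheses: no vertex of $F$ is isolated in $G$, and $G|_F$ has an edge. For edgeless $G|_F$ the correct threshold is $|F|-1$. The paper's derivation has the same blind spot. Its reduction to graphs without isolated vertices in the proof of Theorem~\ref{ThmA} does not transfer to sets $F$ containing such vertices, and Theorem~\ref{Thm:reg-deg>=n-2} requires $G|_F$ to have an edge. You should state these restrictions explicitly rather than assert depth $0$ where it is positive.
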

	
	\section{Consequences}\label{sec4}
	
	Given an integer $t$ and a graph $H$, we define $tH$ to be the graph consisting of $t$ disjoint copies of $H$. The following three graphs will play a crucial role.
	\begin{enumerate}
		\item[(i)] The complete graph on $n$ vertices is the graph $K_n$ with $V(K_n)=[n]$ and edge set $E(K_n)=\{\{i,j\}:\ 1\le i<j\le n\}$.\smallskip
		\item[(ii)] The path on $n$ vertices is the graph $P_n$ with $V(P_n)=[n]$ and edge set $E(P_n)=\{\{i,i+1\}:\ 1\le i<n\}=\{\{1,2\},\{2,3\},\dots,\{n-1,n\}\}$.\smallskip
		\item[(iii)] The cycle on $n$ vertices is the graph $C_n$ with $V(C_n)=[n]$ and edge set $E(C_n)=\{\{1,2\},\{2,3\},\dots,\{n-1,n\},\{n,1\}\}$.
	\end{enumerate}\medskip
    
	Recall that the $k$-th symbolic power of a monomial ideal $I\subset S$ is defined as
	$$
	I^{(k)}\ =\ \bigcap_{P_F\in\Ass(I)}I^k(P_F)S
	$$
	We always have $I^k\subset I^{(k)}$ for all $k\ge1$, but equality may fail.
	
	When $I$ is a squarefree monomial ideal, then $I^k=I^{(k)}$ for some $k\ge1$ if and only if $\Ass(I^k)=\Ass(I)$, see the proof of \cite[Theorem 1.4.6]{HHBook}. As a consequence of this simple fact and Theorem \ref{ThmA} we can give an alternative proof of \cite[Theorem 2.9]{RS}.
	
	\begin{Corollary}\label{Cor:RoySaha}
		Let $G$ be a finite simple graph on $[n]$. The following conditions are equivalent.
		\begin{enumerate}
			\item[\textup{(a)}] $I_c(G)^k=I_c(G)^{(k)}$ for all $k\ge1$.
			\item[\textup{(b)}] $I_c(G)^k=I_c(G)^{(k)}$ for some $k\ge2$.
			\item[\textup{(c)}] $G$ is isomorphic to one of the graphs $K_2$, $K_3$, $P_3$, $2K_2$, $P_4$ or $C_4$ with $($possibly$)$ some isolated vertices.
		\end{enumerate}
	\end{Corollary}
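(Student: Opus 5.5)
We would prove the equivalence through Theorem~\ref{ThmA}, Corollary~\ref{Cor:astabP_FI_c(G)} and the fact quoted above: for squarefree $I$, $I^k=I^{(k)}$ if and only if $\Ass(I^k)=\Ass(I)$. First we remove isolated vertices. If $i$ is isolated, then $I_c(G)^k=x_i^kI_c(H)^k$ with $H=G|_{[n]\setminus\{i\}}$, and, as in the proof of Theorem~\ref{ThmA}, the associated primes of every power change only by adding $(x_i)$. So each of (a), (b), (c) holds for $G$ if and only if it holds for $H$, and we may assume $G$ has no isolated vertices.

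Next we describe $\Ass(I_c(G))$, i.e.\ the minimal primes. We have $P_F\supseteq I_c(G)$ if and only if $F$ meets $[n]\setminus e$ for every edge $e$, that is, $F\not\subseteq e$ for all $e\in E(G)$. The minimal such $F$ are the non-edges $\{i,j\}$ and the vertex sets of triangles. Theorem~\ref{ThmA} gives $\Ass^\infty(I_c(G))=\{P_F:\ |F|\ge2,\ \widetilde b(G|_F)=0\}$. By persistence, (a) is equivalent to $\Ass^\infty=\Ass(I_c(G))$. Hence (a) fails exactly when some $F$ with $|F|\ge3$, other than a triangle, has every non-trivial component of $G|_F$ non-bipartite. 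The implication (a)$\Rightarrow$(b) is trivial.

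For (b)$\Rightarrow$(c) we argue by contrapositive. Assume $G$ has no isolated vertices and is not in the list; we produce a non-minimal $P_F\in\Ass(I_c(G)^k)$ for every $k\ge2$, which by persistence only needs to hold at $k=2$.
\begin{itemize}
\item If $G$ has an independent set $F$ of size $3$, Corollary~\ref{Cor:astabP_FI_c(G)} gives $P_F\in\Ass(I_c(G)^k)$ for all $k\ge1$.
\item If $G$ contains a triangle $T$ and $n\ge4$, pick a vertex $v\notin T$ and take $F=T\cup\{v\}$. Then $G|_F$ is connected and non-bipartite, because $v$ is adjacent to $T$ or else $\{v\}\cup$ (a non-neighbour pair) gives the first case. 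So $P_F\in\Ass(I_c(G)^k)$ for $k\ge2$.
\item Otherwise $G$ is triangle-free with independence number at most $2$, so $n\le5$ by $R(3,3)=6$. For $n\le4$ this forces exactly $K_2,P_3,2K_2,P_4,C_4$ (e.g.\ $K_{1,3}$ has an independent triple), and for $n=3$ the triangle case gives $K_3$. For $n=5$ the only such graph is $C_5$.
\end{itemize}
The case $C_5$ is where Corollary~\ref{Cor:astabP_FI_c(G)} only yields $k\ge3$, so here we check $k=2$ directly. Take $u={\bf x}_{[5]}$, which has degree $5<6$ and so $u\notin I_c(C_5)^2$. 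For each $i$, the path $C_5\setminus\{i\}\cong P_4$ has a perfect matching $\{e,e'\}$, so
$$
x_iu\ =\ ({\bf x}_{[5]}/{\bf x}_e)({\bf x}_{[5]}/{\bf x}_{e'})\ \in\ I_c(C_5)^2 .
$$
Hence $(I_c(C_5)^2:u)=\mathfrak m$, so $P_{[5]}\in\Ass(I_c(C_5)^2)$ is not minimal. By persistence this finishes (b)$\Rightarrow$(c).

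Finally, for (c)$\Rightarrow$(a) we run through the six graphs and check that no $F$ with $|F|\ge3$, other than a triangle, has $\widetilde b(G|_F)=0$. For $P_3,2K_2,P_4,C_4$ every induced subgraph on at least $3$ vertices is bipartite with an edge. For $K_3$ the only $3$-set is the triangle. For $K_2$ there is nothing to check. The main obstacle is making the combinatorial reduction in (b)$\Rightarrow$(c) airtight, in particular treating $C_5$ separately since it escapes the bound $|F|-2\le k$.
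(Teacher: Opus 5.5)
Your proof is correct, and it is organized differently from the paper's, and more soundly. For (b)$\Rightarrow$(c) the paper claims, as a ``simple exercise'', that every $G$ outside the list has a $4$-subset $F$ with $\widetilde b(G|_F)=0$. It then applies Corollary~\ref{Cor:astabP_FI_c(G)} at $k\ge2$. That claim is false: $K_{1,3}$, $3K_2$, $P_5$ and $C_5$ have no such $4$-subset. Your split into three cases is what the argument actually requires:
\begin{itemize}
\item an independent triple;
\item a triangle plus one vertex;
\item the remaining triangle-free graphs with independence number at most $2$, handled by $R(3,3)=6$.
\end{itemize}
Your direct computation $I_c(C_5)^2:{\bf x}_{[5]}=\mathfrak m$ is indispensable. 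For $C_5$ the only admissible $F$ is $[5]$, and Corollary~\ref{Cor:astabP_FI_c(G)} then only gives $k\ge3$. The paper's route is shorter but does not cover every graph. Yours costs a short Ramsey-type case analysis and one explicit colon computation.

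Two small repairs are needed.

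First, do not quote Corollary~\ref{Cor:astabP_FI_c(G)} as giving $P_F\in\Ass(I_c(G)^k)$ for all $k\ge1$ when $F$ is an independent triple. At $k=1$ this contradicts your own remark that $P_F$ is not minimal while $I_c(G)$ is radical. For independent $F$ (with no isolated vertices in $G$), the localization is the squarefree Veronese ideal generated in degree $|F|-1$. Its maximal ideal $\mathfrak m_F$ becomes associated only from $k=|F|-1$ on. So the corollary's bound is off by one for such $F$; for example, it fails for the four leaves of $K_{1,4}$ at $k=2$. For $|F|=3$ the correct threshold is exactly $k\ge2$, which is all you use. You can verify it directly: $I_c(G)(P_F)=(x_ax_b,x_ax_c,x_bx_c)$, and $x_ax_bx_c$ witnesses $\mathfrak m_F$ as an associated prime of the square.

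Second, in the triangle case your claim that $G|_F$ is connected is wrong. The graph $G=K_3\sqcup K_2$ has no independent triple, yet a vertex of the $K_2$ has no neighbour in $T$. The claim is not needed, however. If $G|_F=K_3\sqcup K_1$, it still has $\widetilde b(G|_F)=0$. Here $I_c(G)(P_F)=x_v(x_a,x_b,x_c)+(x_ax_bx_c)$. Then ${\bf x}_F\notin I_c(G)(P_F)^2$, but $x_i{\bf x}_F\in I_c(G)(P_F)^2$ for every $i\in F$. So $P_F\in\Ass(I_c(G)^2)$ in this subcase as well.
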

    \begin{proof}
    	(a) $\Rightarrow$ (b) is true for any ideal.\smallskip
    	
    	(b) $\Rightarrow$ (c) Suppose that $I_c(G)^k=I_c(G)^{(k)}$ for some $k\ge2$. As we explained before, this holds if and only if $\Ass(I_c(G)^k)=\Ass(I_c(G))$. It is a simple exercise in graph theory to show that if $G$ does not satisfy the statement (c), then $V(G)$ contains a subset $F$ with $|F|=4$ such that $\widetilde{b}(G|_F)=0$. Suppose for a contradiction that (c) does not hold. Then we can find such a $4$-subset $F\subset[n]$. By Corollary \ref{Cor:astabP_FI_c(G)}, $P_F\in\Ass(I_c(G)^\ell)$ for all $\ell\ge2$. Hence, $P_F\in\Ass(I_c(G)^k)$. By \cite[Theorem 2.1]{FM1} we have $P_F\in\Ass(I_c(G)^k)\setminus\Ass(I_c(G))$. A contradiction. Hence (c) follows.\smallskip
    	
    	(c) $\Rightarrow$ (a) It is straightforward to see that if $G$ satisfies the statement (c) then $\Ass(I_c(G)^k)=\Ass(I_c(G))$ for all $k\ge1$. Hence (a) follows.
    \end{proof}
	
	Our next application regards the computation of the $\v$-function of $I_c(G)$.\smallskip
	
	Following \cite{CSTVV20}, given a homogeneous ideal $I\subset S$ and a prime $P\in\Ass(I)$, the local $\v$-number of $I$ at $P$ is defined as the least degree of a homogeneous element $f\in S$ such that $I:f=P$, see also \cite{BMS24,Conca23,FPack1,F2023,FMar,FS2,FSPack,FSPackA,FSComparison}. The global $\v$-number of $I$ is then defined as
	$$
	\v(I)\ =\ \min_{P\in\Ass(I)}\v_P(I).
	$$
	
	Let $P\in\Ass^\infty(I)$. Let $\astab_P(I)$ be the least integer $k_P>0$ such that $P\in\Ass(I^k)$ for all $k\ge k_P$. Then one can consider the function $\v_P(I^k)$ for $k\ge k_P$. This is called the (local) $\v_P$-function of $I$ at $P$. The (global) $\v$-function of $I$ is simply the function $k\mapsto\v(I^k)$. It is proved independently in \cite{Conca23} and \cite{FS2}, that the local $\v$-functions and the $\v$-function of a homogeneous ideal $I$ are eventually linear functions. Moreover, in \cite[Theorem 4.1]{FS2}, it is proved that the slope of the $\v$-function of $I$ is the initial degree of $I$. The least integer $k_0>0$ such that $\v(I^k)$ is a linear function for all $k\ge k_0$, is called the \textit{index of $\v$-stability} of $I$ and is denoted by $\vstab(I)$.
	
	It is expected (\cite{FS2} and \cite[Conjecture 2.6]{F2023}) that
	\begin{enumerate}
		\item[(i)] $\v(I^k)<\reg I^k$ for any homogeneous ideal $I\subset S$ and all $k\gg0$.
		\item[(ii)] If $I\subset S$ is a monomial ideal with linear powers, then $\v(I^k)=\reg I^k-1$ for all $k\ge1$.
	\end{enumerate}

    Both conjectures (i) and (ii) have been proved for several classes of monomial ideals, including the family of edge ideals (see the results contained in the paper \cite{F2023}, and \cite[Theorem 5.1]{F2023}, \cite[Theorem 5.1]{BMS24} and \cite[Corollary 5.7]{FMar} for the proofs of the conjectures (i) and (ii) in the case of edge ideals).\smallskip
	
	Next, we establish conjectures (i) and (ii) for complementary edge ideals.
	
	\begin{Theorem}\label{Thm:v-numb-I_c(G)}
		Let $G$ be a finite simple graph on $[n]$. The following statements hold.
		\begin{enumerate}
			\item[\textup{(a)}] For all $k\ge1$,
			$$
			\v(I_c(G)^k)\ =\ \begin{cases}
				(n-2)k&\text{if}\ G=tK_2,\ \text{with}\ t\ge2,\\
				(n-2)k-1&\text{otherwise}.
			\end{cases}
			$$
			\item[\textup{(b)}] $\v(I_c(G)^k)<\reg I_c(G)^k$ for all $k\ge1$.\smallskip
			\item[\textup{(c)}] If $I_c(G)$ has linear powers, then $\v(I_c(G)^k)=\reg I_c(G)^k-1$ for all $k\ge1$.
		\end{enumerate}
	\end{Theorem}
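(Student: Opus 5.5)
The plan is to sandwich $\v(I_c(G)^k)$ between a general lower bound coming from the initial degree and an explicit upper bound given by monomial witnesses. Then (b) and (c) follow by comparison with the regularity formula in Theorem \ref{ThmC}(c)(ii).

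\emph{Reductions and lower bound.} For a monomial ideal, each $\v_P(I^k)$ is attained by a monomial $u\notin I^k$ with $I^k:u=P$, where $P=P_F\in\Ass(I^k)$. For every $i\in F$ we have $x_iu\in I^k$, and the initial degree of $I_c(G)^k$ is $(n-2)k$. Hence $\deg u\ge (n-2)k-1$, and therefore $\v(I_c(G)^k)\ge(n-2)k-1$ always. If $G$ has an isolated vertex $i$, write $I_c(G)^k=x_i^kI_c(H)^k$. The monomial $u=x_i^{k-1}w$, with $w\in\mathcal G(I_c(H)^k)$, satisfies $I_c(G)^k:u=(x_i)$ and $\deg u=(n-2)k-1$. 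So graphs with isolated vertices fall into the ``otherwise'' case with equality. We may therefore assume that $G$ has no isolated vertices. If $G=K_2$, the ideal is principal, $(m)^k:(m^k/x_j)=(x_j)$, and $\v=(n-2)k-1$.

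\emph{Upper bound in the generic case.} Suppose $G\ne tK_2$, so $G$ has a vertex $b$ of degree $\ge2$, with neighbours $a$ and $c$. I would first settle $k=1$ using the description of $\Ass(I_c(G))$ in \cite[Theorem 2.1]{FM1}. The aim is a monomial $u_1$ of degree $n-3$ and a prime $P_F\in\Ass(I_c(G))$ with $I_c(G):u_1=P_F$. I expect $F$ to be $\{a,b,c\}$ when $a,b,c$ span a triangle, and otherwise a $4$-set $F$ with $\widetilde b(G|_F)=0$, obtained by completing the path $a$–$b$–$c$. The candidate is $u_1={\bf x}_{[n]\setminus F}\cdot m_F$ for a suitable monomial $m_F$ in the variables of $F$.

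For general $k$, the candidate is $u_k=f^{k-1}u_1$, where $f={\bf x}_{[n]}/{\bf x}_e$ for an edge $e\subset F$; this has degree $(n-2)k-1$. The inclusion $P_F\subseteq I^k:u_k$ is clear. The reverse inclusion is a multidegree count, which is where Proposition \ref{Prop:local-I_c(G)} helps. Localizing at $P_F$ reduces the problem to the small ideal $I_c(G)(P_F)$ on $|F|\le4$ variables. There one checks directly that $u_k\notin I^k$ and that $x_ju_k\notin I^k$ for $j\notin F$; this uses the fact that the exponent of ${\bf x}_{[n]\setminus F}$ in $u_k$ is exactly $k$. This verification is the step I expect to be the main obstacle, and if $f^{k-1}u_1$ fails I would adjust the $F$-part of $u_k$ by distributing the exponents as in the proof of Theorem \ref{Thm:reg-deg>=n-2}(d).

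\emph{The case $G=tK_2$, $t\ge2$.} Here $I_c(G)$ is, after relabeling, the product-type ideal $\sum_s\prod_{r\ne s}{\bf x}_{e_r}$. I would show that no monomial $u$ of degree $(n-2)k-1$ has $I^k:u$ prime. Such a $u$ would satisfy $x_iu\in\mathcal G(I^k)$ for all $i\in F$, and comparing multidegrees across two different edges forces $|F|=1$. However, $P_{\{i\}}\notin\Ass$ when there are no isolated vertices, by Theorem \ref{ThmA}. For the matching upper bound, take $u=\prod_r{\bf x}_{e_r}^{k-1}\cdot x_{a_1}x_{a_2}\prod_{r\ge3}{\bf x}_{e_r}$, with $e_1=\{a_1,b_1\}$ and $e_2=\{a_2,b_2\}$. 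This has degree $(n-2)k$, and a direct check shows $I^k:u=(x_{b_1},x_{b_2})$, which is associated by Theorem \ref{ThmA}.

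\emph{Parts (b) and (c).} Part (b) follows by comparing (a) with $\reg I_c(G)^k\ge(n-2)k+1$ from Theorem \ref{ThmC}(c)(ii). For $tK_2$ with $t\ge2$ we have $c\ge2$, so $\reg I_c(G)^k\ge(n-2)k+1>(n-2)k$. Part (c) uses that linear powers force $c(G)=1$ by Theorem \ref{ThmC}(d), hence $\reg I_c(G)^k=(n-2)k$ and $G\ne tK_2$ with $t\ge2$, so (a) gives $\v(I_c(G)^k)=(n-2)k-1=\reg I_c(G)^k-1$.
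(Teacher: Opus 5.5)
\textbf{There is a genuine gap in the triangle-free case, and the $tK_2$ witness fails for $k\ge2$.} Your lower bound, the isolated-vertex case, and the triangle case are fine; there $u_k=f^{k-1}{\bf x}_{[n]\setminus F}$ is exactly the paper's witness. Your balancing argument that no $u$ of degree $(n-2)k-1$ works for $tK_2$ is a valid alternative to the paper's $x_3x_4$ argument.

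\textbf{The triangle-free case.} Take $G$ triangle-free with no isolated vertices and $G\ne tK_2$, so it has an induced path $a$--$b$--$c$ with $\{a,c\}\notin E(G)$. For $k=1$ you aim for $I_c(G):u_1=P_F$ with $|F|=4$. But \cite[Theorem 2.1]{FM1}, the result you invoke, gives $\Ass(I_c(G))=\{P_e:\ e\in E(G^c)\}\cup\{P_T:\ T\in\mathcal{T}(G)\}$. This contains only primes of height $2$ and $3$. Moreover, a $4$-set $F$ containing the induced path with $\widetilde b(G|_F)=0$ would have to contain a triangle. So it does not exist for triangle-free graphs such as $P_3$, $P_4$, $C_4$, $C_5$. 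Since $u_k=f^{k-1}u_1$ is built on $u_1$, this whole case collapses.

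Your template cannot be rescued with a smaller $F$ either. For $k=1$ the only available primes here are $P_{\{a,c\}}$-type primes of non-edges. For $|F|=2$, insisting on exponent $k$ on every variable outside $F$ is impossible, because ${\bf x}_{[n]\setminus F}^k$ alone already has degree $(n-2)k$.

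The missing idea is to use the non-edge prime $P_{\{a,c\}}$ with $u_k=({\bf x}_{[n]}/(x_ax_bx_c))({\bf x}_{[n]}/(x_ax_b))^{k-1}$, whose $x_b$-exponent is $0$. The colon must then be checked directly, not by localization:
\begin{itemize}
\item Suppose $\prod_{i=1}^k{\bf x}_{[n]\setminus e_i}$ divides $vu_k$, where $x_a,x_c\nmid v$.
\item The vanishing $x_a$-degree forces $a\in e_i$ for every $i$.
\item Since $\{a,c\}\notin E(G)$, this gives $c\notin e_i$ for every $i$.
\item So the product has $x_c$-degree $k$, which exceeds the available $k-1$.
\end{itemize}

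\textbf{The $tK_2$ upper-bound witness.} Your monomial $\prod_r{\bf x}_{e_r}^{k-1}\cdot x_{a_1}x_{a_2}\prod_{r\ge3}{\bf x}_{e_r}$ has degree $2tk-2$, not $(2t-2)k$, once $k\ge2$. Worse, it lies in $I_c(G)^k$. A generator of $I_c(G)^k$ has a common exponent $c_r\le k$ on both endpoints of each $e_r$, with $\sum_r c_r=(t-1)k$. Taking $c_1=1$, $c_2=k-1$ and $c_r=k$ for $r\ge3$ gives such a generator dividing your $u$, so the colon is the unit ideal.

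A correct choice is $x_{a_1}x_{a_2}^{k}x_{b_2}^{k-1}\prod_{r\ge3}{\bf x}_{e_r}^{k}$. This is the paper's ${\bf x}_{[2t]\setminus\{1,3\}}({\bf x}_{[2t]\setminus\{1,2\}})^{k-1}$ up to relabeling, and its colon is $(x_{b_1},x_{b_2})$.

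\textbf{A smaller point in (b).} The inequality $\reg I_c(G)^k\ge(n-2)k+1$ holds only when $c(G)\ge2$. In the ``otherwise'' case you only need $\reg I_c(G)^k\ge(n-2)k$, which always holds.
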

	\begin{proof}
		(a) By \cite[Proposition 2.2]{F2023}, it follows that $\v(I_c(G)^k)\ge(n-2)k-1$ for all $k\ge1$. Therefore, when $G\ne tK_2$ to prove that equality holds for all $k\ge1$, it suffices to find a monomial $u_k$, for each $k\ge1$, such that $I_c(G)^k:u_k\in\Ass(I_c(G)^k)$ and $\deg(u_k)=(n-2)k-1$.
		
		Assume that $G$ contains an isolated vertex $i$. Then $(x_i)\in\Ass(I_c(G)^k)$ for all $k\ge1$. Let $e\in E(G)$, then $i\notin e$. We consider the monomial $u_k=({\bf x}_{[n]\setminus e})^k/x_i$ for all $k\ge1$. Note that $x_iu_k\in I_c(G)^k$ for all $k\ge1$. Now, let $v$ be a monomial with $\supp(v)\subset[n]\setminus\{i\}$. Then $\deg_{x_i}(vu_k)=k-1$. Hence $vu_k\notin I_c(G)^k$. We conclude that $I_c(G):u_k=P_{\{i\}}=(x_i)$, as desired.
		
		Suppose now that $G$ does not contain isolated vertices. Let $G^c$ be the complementary graph of $G$. That is, the graph with the same vertex set of $G$ whose edges are the non-edges. Furthermore, let $\mathcal{T}(G)$ be the set of triangles (also called $3$-cliques) of $G$. By \cite[Theorem 2.1]{FM1}, since $G$ does not have isolated vertices, we have
		$$
		\Ass(I_c(G))\ =\ \{P_e:\ e\in E(G^c)\}\cup\{P_T:\ T\in\mathcal{T}(G)\}.
		$$
		
		First, suppose that $G$ contains a triangle $T=\{p,q,r\}$. Consider the monomial $u_k=({\bf x}_{[n]}/(x_px_qx_r))({\bf x}_{[n]}/(x_px_q))^{k-1}$ with $k\ge1$. Note that $\deg(u_k)=(n-2)k-1$. We claim that $I_c(G)^k:u_k=P_T$. It is clear that $P_T\subset I_c(G)^k:u_k$. For instance,
		$$
		x_pu_k\ =\ ({\bf x}_{[n]}/(x_qx_r))({\bf x}_{[n]}/(x_px_q))^{k-1}\in I_c(G)^k.
		$$
		To prove the reverse inclusion $I_c(G)^k:u_k\subset P_T$, we show that if $v$ is a monomial with $\supp(v)\subset[n]\setminus\{p,q,r\}$, then $v\notin I_c(G)^k:u_k$. Suppose on the contrary that $v\in I_c(G)^k:u_k$. Then, there exist edges $e_1,\dots,e_k\in E(G)$ such that
		\begin{equation}\label{eq:divide-edges}
				w\ =\ \prod_{i=1}^k{\bf x}_{[n]\setminus e_i}\quad\text{divides}\quad vu_k\ =\ v({\bf x}_{[n]}/(x_px_qx_r))({\bf x}_{[n]}/(x_px_q))^{k-1}.
		\end{equation}
	    It follows that
		$$
		\deg_{x_p}(w)\ =\ k-|\{j\in[k]:\ p\in e_j\}|\ \le\ \deg_{x_p}(v)+\deg_{x_p}(u_k)\ =\ 0,
		$$
		because $x_p$ does not divide $v$ and $u_k$. Since $|\{j\in[k]:p\in e_j\}|\le k$, the above inequality implies that $|\{j\in[k]:p\in e_j\}|=k$. Hence $p\in e_j$ for all $j\in[k]$. Similarly, $q\in e_j$ for all $j\in[k]$. Hence $e_1=\dots=e_k=\{p,q\}$. It follows that $\deg_{x_r}(w)=k$. This $x_r$-degree should be less than or equal to $\deg_{x_r}(vu_k)=\deg_{x_r}(v)+\deg_{x_r}(u_k)=k-1$, because $r\notin\supp(v)$. But this is not possible. Hence $v\notin I_c(G)^k:u_k$. We conclude that $I_c(G)^k:u_k=P_T$.
		
		Now, assume that $G$ contains no triangles. Since $G$ does not have isolated vertices, either $G$ contains an induced $3$-path $P_3$ or else $G=tK_2$ with $t\ge2$ (since $n\ge3$).
		
		Suppose first that $G$ contains an induced $3$-path. Say, that $\{p,q\},\{q,r\}\in E(G)$, but $e=\{p,r\}\notin E(G)$. Hence $P_e\in\Ass(I_c(G))$. Consider again the monomial $u_k=({\bf x}_{[n]}/(x_px_qx_r))({\bf x}_{[n]}/(x_px_q))^{k-1}$ with $k\ge1$. Note that $\deg(u_k)=(n-2)k-1$. We claim that $I_c(G)^k:u_k=P_e$. As before, $P_e\subset I_c(G)^k:u_k$. Conversely, we show that if $v$ is a monomial with $\supp(v)\subset[n]\setminus\{p,r\}$, then $v\notin I_c(G)^k:u_k$. Suppose on the contrary this was the case. Then, equation (\ref{eq:divide-edges}) would be again valid. From this equation, we derive that all edges $e_1,\dots,e_k$ contain $p$. Since $\{p,r\}\notin E(G)$, it follows that $\deg_{x_r}(w)=k$. But this $x_r$-degree should be less than or equal to $\deg_{x_r}(vu_k)=k-1$, which is absurd. We conclude that $I_c(G)^k:u_k=P_e$.
		
		Now, let $G=tK_2$ with $t\ge2$. Say $E(G)=\{\{1,2\},\{3,4\},\dots,\{2t-1,2t\}\}$. First we show that there is no monomial $u\in S$ of degree $(2t-2)k-1$ such that $I_c(G)^k:u$ is a monomial prime ideal. Suppose for a contradiction that such a monomial $u$ exists. Then $I_c(G)^k:u$ should be generated by variables. Up to relabeling, we may assume that $x_1\in I_c(G)^k:u$. Then $x_1u\in I_c(G)$. Since $\deg(x_1u)=(2t-2)k$, then $x_1u\in\mathcal{G}(I_c(G)^k)$. Therefore, there exist edges $e_1,\dots,e_k\in E(G)$ such that $x_1u=\prod_{i=1}^k{\bf x}_{[2t]\setminus e_i}$. Since $x_1$ divides this monomial, we have $1\notin e_i$ for at least one index $i\in[k]$. Say $e_i=e_1=\{3,4\}$. Therefore $u={\bf x}_{[2t]\setminus\{1,3,4\}}(\prod_{i=2}^k{\bf x}_{[2t]\setminus e_i})$. Note that $x_3x_4\in I_c(G)^k:u$ because $x_3x_4u=x_2{\bf x}_{[2t]\setminus\{1,2\}}(\prod_{i=2}^{k}{\bf x}_{[2t]\setminus e_i})\in I_c(G)^k$. We claim that $x_3\notin I_c(G)^k:u$ and $x_4\notin I_c(G)^k:u$. This shows that $x_3x_4$ is a minimal generator of $I_c(G)^k:u$ and so $I_c(G)^k:u$ is not generated by variables. If $x_3\in I_c(G)^k:u$, then we should have
		$$
		x_3u\ =\ {\bf x}_{[2t]\setminus\{1,4\}}(\prod_{i=2}^{k}{\bf x}_{[2t]\setminus e_i})\ =\ {\bf x}_{[2t]}^k/((x_1x_4){\bf x}_{e_2}\cdots {\bf x}_{e_k})\in I_c(G)^k.
		$$
		Since $\deg(x_3u)=(2t-2)k$, this is equivalent to $w=(x_1x_4){\bf x}_{e_2}\cdots {\bf x}_{e_k}\in I(G)^k$. Let $w'={\bf x}_{e_2}\cdots {\bf x}_{e_k}$. Since $\mathcal{G}(I_c(G))=\{{\bf x}_{\{1,2\}},\dots,{\bf x}_{\{2t-1,2t\}}\}$, each minimal monomial generator of $v\in I(G)^k$ is such that $\deg_{x_1}(v)+\deg_{x_2}(v)$ is an even integer. On the other hand, $\deg_{x_1}(w)+\deg_{x_2}(w)=1+\deg_{x_1}(w')+\deg_{x_2}(w')$ is an odd integer, because $w\in\mathcal{G}(I_c(G)^{k-1})$. Thus $w\notin I_c(G)^k$, and so $x_3\notin I_c(G)^k:u$, as desired.
		
		Summarizing our discussion thus far, we proved that $\v(I_c(G)^k)\ge(2t-2)k$ for all $k\ge1$. Now, we prove that equality holds. To this end, consider the monomial $u_k={\bf x}_{[2t]\setminus\{1,3\}}({\bf x}_{[2t]\setminus\{1,2\}})^{k-1}$. Note that $x_1u_k=x_4{\bf x}_{[2t]\setminus\{3,4\}}({\bf x}_{[2t]\setminus\{1,2\}})^{k-1}\in I_c(G)^k$. Similarly $x_3u_k\in I_c(G)^k$. Hence $(x_1,x_3)\subset I_c(G)^k:u_k$. To prove the reverse inclusion, consider a monomial $v\in S$ with $\supp(v)\subset[2t]\setminus\{1,3\}$. Suppose for a contradiction that $vu_k\in I_c(G)^k$, then there exist edges $e_1,\dots,e_k\in E(G)$ such that $w=\prod_{i=1}^k{\bf x}_{[2t]\setminus e_i}$ divides $vu_k$. Since $\deg_{x_1}(vu_k)=0$ it follows that $\deg_{x_1}(w)=0$, and so $e_1=\dots=e_k=\{1,2\}$. Hence $\deg_{x_3}(w)=k$. However, this $x_3$-degree should be less than or equal to $\deg_{x_3}(vu_k)=k-1$, an absurd. It follows that $I_c(G)^k:u_k=(x_1,x_3)$ as desired. We conclude that $\v(I_c(G)^k)=(2t-2)k$ for all $k\ge1$, and statement (a) follows.\smallskip
		
		(b) Let $c=c(G)$. By \cite[Theorem 4.1]{FM1} or also Theorem \ref{ThmC}(c)(ii) in this paper, it follows indeed that $\v(I_c(G)^k)<\reg I_c(G)^k$ for all $k\ge1$.\smallskip
		
		(c) Assume that $I_c(G)$ has linear powers. Then $\reg I_c(G)^k=(n-2)k$ for all $k\ge1$. Moreover, \cite[Theorem B]{FM} (or also Theorem \ref{ThmC}(c)(ii)) yields $c(G)=1$. Part (a) implies that $\v(I_c(G)^k)=(n-2)k-1=\reg I_c(G)^k-1$ for all $k\ge1$.
	\end{proof}
	
	\begin{Corollary}
		We have $\vstab(I_c(G))=1$ for any graph $G$.
	\end{Corollary}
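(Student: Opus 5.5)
The corollary will follow almost immediately from Theorem \ref{Thm:v-numb-I_c(G)}(a). By definition, $\vstab(I_c(G))$ is the least integer $k_0>0$ such that $k\mapsto\v(I_c(G)^k)$ is a linear function for all $k\ge k_0$. So it suffices to show that $\v(I_c(G)^k)$ is given by a single linear expression in $k$ valid for every $k\ge1$.

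We split into the two cases of Theorem \ref{Thm:v-numb-I_c(G)}(a). If $G=tK_2$ with $t\ge2$, then $\v(I_c(G)^k)=(n-2)k$ for all $k\ge1$. Otherwise, $\v(I_c(G)^k)=(n-2)k-1$ for all $k\ge1$.

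In both cases the formula is linear in $k$ with slope $n-2$, the initial degree of $I_c(G)$. This is consistent with \cite[Theorem 4.1]{FS2}. The formula holds already from $k=1$, so $\vstab(I_c(G))=1$.

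The only point requiring care is that Theorem \ref{Thm:v-numb-I_c(G)}(a) covers every graph on $[n]$, including graphs with isolated vertices, and is valid from $k=1$ on. Both facts hold: its proof treats the isolated-vertex case separately, and in every case it exhibits the witnessing monomial $u_k$ for all $k\ge1$. Hence no case is left over, and there is no real obstacle.
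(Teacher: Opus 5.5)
Your proposal is correct and follows the paper's own (implicit) argument. The corollary is read off directly from Theorem \ref{Thm:v-numb-I_c(G)}(a), which gives $\v(I_c(G)^k)$ as a single linear function of $k$, either $(n-2)k$ or $(n-2)k-1$, valid for every $k\ge1$ and every graph (tacitly with at least one edge, so that $I_c(G)\ne0$), and this forces $\vstab(I_c(G))=1$.
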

	
	It would be of great interest to determine explicitly all the local $\v$-functions of a complementary edge ideal, as well as those of an edge ideal.\medskip

	We conclude the paper with the following remark.\smallskip
	
	An ideal $I$ in a Noetherian ring is said to satisfy the \textit{strong persistence property} if $I^{k+1}:I=I^k$ for all $k\ge1$. When $I$ satisfies the strong persistence property, then it satisfies the persistence property as well. Edge ideals satisfy the strong persistence property, see \cite[Lemma 2.12]{MMV}.
	
	Besides Theorem \ref{ThmB}, other methods are available to prove that an ideal satisfies the persistence property. For instance, Herzog and Qureshi \cite[Theorem 1.4]{HQ} proved that if the Rees algebra $\mathcal{R}(I)=\bigoplus_{k\ge0}I^kt^k\subset S[t]$ of a monomial ideal $I\subset S$ satisfies Serre's condition $(S_2)$, then $I$ satisfies the (strong) persistence property.
	
	It is expected that the Rees algebra of a complementary edge ideal is always Cohen-Macaulay (\cite[Conjecture 4.7]{FM2}). If this is true, then any complementary edge ideal would satisfy the strong persistence property, and this would yield another proof of Theorem \ref{ThmA}. At present, whether $I_c(G)$ always satisfies the strong persistence property is an open question.\medskip
	
	\textbf{Acknowledgments.} A. Ficarra was supported by the Grant JDC2023-051705-I funded by MICIU/AEI/10.13039/501100011033 and by the FSE+ and also by INDAM (Istituto Nazionale Di Alta Matematica). We would like to thank Kamalesh Saha for some useful discussions regarding Theorem \ref{Thm:v-numb-I_c(G)}.
\end{document}